\newcommand{\ra}{\rightarrow}
\newcommand{\surj}{\ra\!\!\!\ra}	
\newcommand{\ol}{\overline}
\newtheorem{theorem}{Theorem}[section]
\newtheorem{proposition}[theorem]{Proposition}
\newtheorem{lemma}[theorem]{Lemma}
\newtheorem{corollary}[theorem]{Corollary}
\newtheorem{question}[theorem]{Question}
\newcommand{\ga}{\alpha}	\newcommand{\gb}{\beta}
\newcommand{\gj}{\blacksquare}	\newcommand{\gl}{\lambda}
\newcommand{\gm}{\gamma}	
		\newcommand{\gs}{\sigma}
	\newcommand{\BF}{\mbox{$\mathbb F$}}
	\newcommand{\BN}{\mbox{$\mathbb N$}}
	\newcommand{\BR}{\mbox{$\mathbb R$}}
	\newcommand{\BZ}{\mbox{$\mathbb Z$}}
\newcommand{\CC}{\mbox{$\mathcal C$}}
\newcommand{\mm}{\mbox{$\mathfrak m$}}	
	\newcommand{\p}{\mbox{$\mathfrak p$}}
\newcommand{\mq}{\mbox{$\mathfrak q$}}
\newcommand{\ot}{\mbox{\,$\otimes$\,}}	\newcommand{\op}{\mbox{$\oplus$}}
\newcommand{\Spec}{\mbox{\rm Spec\,}}	\newcommand{\hh}{\mbox{\rm ht\,}}
\newcommand{\Aut}{\mbox{\rm Aut\,}}	
\newcommand{\Hom}{\mbox{\rm Hom\,}}	
\newcommand{\Um}{\mbox{\rm Um}}		\newcommand{\SL}{\mbox{\rm SL}}
\newcommand{\GL}{\mbox{\rm GL}}		
	\newcommand{\E}{\mbox{\rm E}}
\begin{document}
\begin{center}
{\Large \bf Serre Dimension of Monoid Algebras}\\\vspace{.2in} {\large
  Manoj K. Keshari and Husney Parvez Sarwar }\\
\vspace{.1in}
{\small
Department of Mathematics, IIT Bombay, Powai, Mumbai - 400076, India;\;\\
    keshari@math.iitb.ac.in; mathparvez@gmail.com}
\end{center}

\begin{abstract}
Let $R$ be a commutative Noetherian ring of dimension $d$, $M$ a
commutative cancellative torsion-free monoid of rank $r$ and $P$ a
finitely generated projective $R[M]$-module of rank $t$.
  
$(1)$ Assume $M$ is $\Phi$-simplicial seminormal.  $(i)$ If
  $M\in \CC(\Phi)$, then {\it Serre dim} $R[M]\leq
  d$.  $(ii)$ If $r\leq 3$, then {\it Serre dim} $R[int(M)]\leq d$.
  
 $(2)$ If $M\subset \BZ_+^2$ is a
normal monoid of rank $2$, then {\it Serre dim} $R[M]\leq d$.

$(3)$ Assume $M$ is $c$-divisible, $d=1$ and $t\geq 3$. Then
  $P\cong \wedge^t P\op R[M]^{t-1}$.
 
 $(4)$ Assume $R$ is a uni-branched affine algebra over an
  algebraically closed field and $d=1$. 
Then $P\cong \wedge^t P\op R[M]^{t-1}$.
 \end{abstract}

\section{Introduction}

{\it Throughout rings are commutative Noetherian with $1$; projective
  modules are finitely generated and of constant rank; monoids are 
commutative cancellative torsion-free;
  $\BZ_+$ denote the additive monoid of non-negative integers.}

Let $A$ be a ring and $P$ a projective $A$-module. An element $p\in P$
is called {\it unimodular}, if there exists $\phi\in \Hom(P,A) $ such
that $\phi(p)=1$. We say Serre dimension of $A$ (denoted as {\it Serre
  dim $A$}) is $\leq t$, if every projective $A$-module of rank $\geq
t+1$ has a unimodular element.  Serre dimension of $A$ measures the
surjective stabilization of the Grothendieck group $K_0(A)$. Serre's
problem on the freeness of projective $k[X_1,\ldots,X_n]$-modules, $k$
a field, is equivalent to {\it Serre dim} $k[X_1,\ldots,X_n]=0$.

After the solution of Serre's problem by Quillen \cite{Qu} and Suslin
\cite{Su76}, many people worked on surjective stabilization of 
polynomial extension of a ring.  Serre \cite{Serre} proved {\it Serre
  dim} $A \leq \dim A$, Plumstead \cite{P} proved {\it Serre dim}
$A[X]\leq \dim A$, Bhatwadekar-Roy \cite{BhR} proved {\it Serre dim}
$A[X_1,\ldots,X_n]\leq \dim A$ and Bhatwadekar-Lindel-Rao \cite{BLR}
proved {\it Serre dim} $A[X_1,\ldots,X_n,Y_1^{\pm 1},\ldots,Y_m^{\pm
    1}]\leq \dim A$.

Anderson conjectured an analogue of Quillen-Suslin theorem for monoid
algebras over a field which was answered by Gubeladze
\cite{Gu} (see \ref{G1}) as follows.

\begin{theorem}\label{G1}
Let $k$ be a field and $M$ a monoid. Then $M$ is seminormal if and
only if all projective $k[M]$-modules are free.
\end{theorem}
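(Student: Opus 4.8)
The statement is an equivalence whose two implications I would prove by entirely different methods. For the direction \emph{all projectives free $\Rightarrow M$ seminormal} I would argue by contraposition, using the standard fact that, for a field $k$, the ring $k[M]$ is seminormal precisely when the monoid $M$ is. If $M$ is not seminormal, pick $x \in \mathrm{gp}(M)$ with $2x, 3x \in M$ but $x \notin M$; localizing and restricting to the relevant face reproduces the cuspidal model $k[t^2,t^3]$, the coordinate ring of the cuspidal cubic. The conductor square comparing $k[M]$ with its seminormalization, together with the associated units--Picard (Mayer--Vietoris) exact sequence, then exhibits a nontrivial class in $\mathrm{Pic}(k[M])$; concretely $\mathrm{Pic}(k[t^2,t^3]) \cong (k,+) \neq 0$. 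A nontrivial Picard group produces a nonfree projective of rank one, contradicting the hypothesis. I expect this direction to be routine.

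The content of the theorem is the converse, \emph{$M$ seminormal $\Rightarrow$ every projective $k[M]$-module is free}, which is Gubeladze's theorem. My plan begins with two reductions. First, any projective $k[M]$-module is finitely presented, and $k[M] = \varinjlim k[M_0]$ over the finitely generated submonoids $M_0 \subseteq M$, so the module is extended from some $k[M_0]$; replacing $M_0$ by its seminormalization, which is again finitely generated and (because $M$ is seminormal) still contained in $M$, I may assume $M_0$ seminormal. Thus it suffices to treat $M$ finitely generated and seminormal. Then I would embed $M \hookrightarrow \mathrm{gp}(M) \cong \BZ^r$, form the rational cone $C = \BR_{\geq 0} M$, and set up an induction on the rank $r$, the base case $r=0$ giving $k[M]=k$, over which projectives are vector spaces and hence free.

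For the inductive step I would exploit the facet structure of $C$. A supporting functional $\ell$ of a facet induces a $\BZ$-grading on $k[M]$; the degree-zero part is the monoid ring of a lower-rank seminormal face, handled by induction, while inverting the facet variable presents localizations of $k[M]$ as polynomial or Laurent-polynomial extensions of monoid rings of faces, to which the Quillen--Suslin and Bhatwadekar--Lindel--Rao extension theorems apply directly. The goal is to glue these locally free descriptions into global freeness, and the role of seminormality is precisely to guarantee that the only place a nonfree obstruction could be created --- the apex of the cone --- carries no cuspidal singularity of the type that powered the converse direction; seminormality is exactly the hypothesis that removes those cusps.

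The main obstacle is that the two engines of the polynomial case are unavailable for monoid rings. There is no automatic Quillen patching: a projective that is locally extended over the localizations of $k[M]$ need not glue formally, since $k[M]$ is not a polynomial ring, so one must supply a genuine substitute. In Gubeladze's hands this is a scaling/homotopy argument --- the cone dilation $t \mapsto \lambda t$ acts trivially on isomorphism classes of projectives --- reinforced by a nilpotence statement for the relevant elementary action; establishing these is the technical heart and the point at which seminormality is used in an essential, non-formal way. Once that machinery is in place, the induction over the faces of $C$ and the invocations of the classical extension theorems are comparatively mechanical.
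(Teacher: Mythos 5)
Note first that the paper does not prove this statement: Theorem \ref{G1} is quoted as Gubeladze's solution of Anderson's conjecture, with the proof residing in \cite{Gu} (see also Swan's exposition \cite{Swan}). So your attempt can only be measured against Gubeladze's argument, and against that standard it has a genuine gap at the crux. Your reductions for the hard direction (seminormal $\Rightarrow$ free) are sound --- passage to a finitely generated submonoid over which the module is defined, replacement by its seminormalization (which is again finitely generated and lands inside $M$ because $M$ is seminormal), induction on rank --- and you correctly name the two missing engines: there is no Quillen patching for monoid rings, and the substitute is Gubeladze's dilation/homotopy machinery. But everything after ``once that machinery is in place'' \emph{is} the theorem. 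The proof that the multiplicative dilations $x \mapsto cx$ of $M$ act trivially on isomorphism classes of projectives, together with the pyramidal decomposition of the cone and the Milnor-patching over the resulting cover by ``almost polynomial'' submonoid rings, constitutes essentially the entire content of \cite{Gu}; deferring it means no proof has been given. Moreover your facet-induction sketch misstates the local structure: inverting a ``facet variable'' of $k[M]$ does not in general produce a polynomial or Laurent polynomial extension of a face ring to which Quillen--Suslin or \cite{BLR} applies (that requires normality and more care even then); this is precisely why Gubeladze works with pyramidal submonoids rather than faces, and why the facet-by-facet gluing you call ``comparatively mechanical'' cannot be run as described.

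The easy direction also has a concrete flaw. The step ``localizing and restricting to the relevant face reproduces the cuspidal model $k[t^2,t^3]$'' fails in general: a witness $x \in \mathrm{gp}(M)$ with $2x,3x \in M$, $x \notin M$ need not sit over a face of the cone, and there is in general no $k$-algebra retraction of $k[M]$ onto a cusp, so nontriviality of $\mathrm{Pic}(k[t^2,t^3]) \cong (k,+)$ (which you state correctly) does not transport to $\mathrm{Pic}(k[M])$ by functoriality alone. The correct route is the one you gesture at in your second sentence --- the conductor square for $k[M] \subset k[M^{sn}]$ and the units--Pic Mayer--Vietoris sequence --- but one must then exhibit a unit of the conductor quotient that is not a product of liftable units, and verify that the resulting class survives the filtered colimit over finitely generated submonoids when $M$ is not finitely generated. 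That is a genuine argument (carried out in \cite{Gu}, \cite{Swan}; for rank two see Anderson \cite{An78}), not a routine consequence of the cusp computation. In short: your proposal is an accurate map of Gubeladze's proof, and its reductions are correct, but at both of its load-bearing points it records where the proof lives rather than supplying one.
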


Gubeladze \cite{G1} asked the following 

\begin{question}\label{3q1}
Let $M\subset \BZ_+^r$ be a monoid of rank $r$ with $M\subset \BZ_+^r$ an
integral extension. Let $R$ be a ring of dimension $d$.
 Is {\it Serre dim} $R[M]\leq d$?
  \end{question}  
 
We answer Question \ref{3q1} for some class of monoids. Recall that a
finitely generated monoid $M$ of rank $r$ is called {\it $\Phi$-simplicial}
if $M$ can be embedded in $\BZ_+^r$ and the extension $M\subset
\BZ_+^r$ is integral (see \cite{G2}). A $\Phi$-simplicial monoid is
commutative, cancellative and torsion-free.

\begin{define}\label{3d11}
 Let $\CC(\Phi)$ denote the class of seminormal $\Phi$-simplicial
 monoids $M\subset \BZ_+^{r}$ of rank $r$ such that if
 $\BZ_+^r=\{t_1^{s_1}\ldots t_{r}^{s_r}|\, s_i\geq 0\}$, then for
 $1\leq m \leq r$, $M_m=M\cap \{t_1^{s_1}\ldots t_{m}^{s_m}|\, s_i\geq
 0\}$ satisfies the following properties: Given a positive integer
 $c$, there exist integers $c_i > c$ for $i=1,\ldots, m-1$ such that
 for any ring $R$, the automorphism $\eta \in Aut_{R[t_m]}(
 R[t_1,\ldots,t_{m}])$ defined by $\eta(t_i)=t_i+t_m^{c_i}$ for $
 i=1,\ldots, m-1$, restricts to an $R$-automorphism of $R[M_m]$.  It
 is easy to see that $M_m\in \CC(\Phi)$ and rank $M_m=m$ for $1\leq
 m\leq r$.
\end{define} \medskip

The following result (\ref{3t5}, \ref{3t7}) answers
Question \ref{3q1} for monoids in 
$\CC(\Phi)$.

\begin{theorem}\label{3t1} 
Let $M\subset \BZ_+^r$ be a seminormal $\Phi$-simplicial monoid of
rank $r$ and $R$ a ring of dimension $d$.

$(1)$ If $M\in \CC(\Phi)$, then {\it Serre dim} $R[M]\leq d$. 

$(2)$ If $r\leq 3$, then {\it Serre dim} $R[int(M)]\leq d$, where
$int(M)=int(\BR_+M)\cap \BZ_+^3$ and $int(\BR_+M)$ is the interior of
the cone $\BR_+M\subset \BR^3$ with respect to Euclidean topology.
\end{theorem}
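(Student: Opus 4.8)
The plan is to prove (1) by induction on the rank $r$, using the tower $M_1\subset\cdots\subset M_r=M$ built into the definition of $\CC(\Phi)$, and to handle (2) by a direct low-rank analysis of the interior monoid. For (1) the base case $r=0$ gives $R[M]=R$, so Serre dim $R\le\dim R=d$ by Serre's theorem. For the inductive step let $P$ be projective over $R[M_m]$ of rank $\ge d+1$. Grading $R[M_m]$ by the exponent of $t_m$, the degree-zero subring is $R[M_{m-1}]$, and collapsing the ideal generated by all monomials of positive $t_m$-degree yields a retraction $R[M_m]\surj R[M_{m-1}]$. Since $M_{m-1}\in\CC(\Phi)$ has rank $m-1$, the induction hypothesis produces a unimodular element of $P\otimes_{R[M_m]}R[M_{m-1}]$; I would lift it to some $p\in P$, unimodular modulo the augmentation ideal, so that the whole problem reduces to correcting $p$ to a genuinely unimodular element of $P$.

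For the correction I would transport the Bhatwadekar-Roy proof that Serre dim $A[X]\le\dim A$ into the monoid setting, working inside the overring $R[t_1,\ldots,t_m]\supset R[M_m]$. The locus on which the lift $p$ fails to be unimodular is a proper closed subset, and one removes it by moving $p$ with an automorphism. This is exactly where membership in $\CC(\Phi)$ is used: for any prescribed bound it supplies an $R[t_m]$-automorphism $\eta(t_i)=t_i+t_m^{c_i}$ with the exponents $c_i$ as large as desired that restricts to $R[M_m]$. These are the monoid analogues of the elementary moves used in \cite{BhR}, and because the $c_i$ are unbounded a Nagata-style general-position argument lets a suitable $\eta$ carry $p$ off the bad locus while fixing the grading variable $t_m$. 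The main obstacle is precisely this step: one must verify that the \emph{restricted} automorphisms guaranteed by $\CC(\Phi)$---a proper subgroup of $\E\big(R[M_m]\big)$, all fixing $t_m$---already afford enough freedom to reach general position, and that the $t_m$-adic bookkeeping descends cleanly from the polynomial overring to the subring $R[M_m]$. Seminormality of each $M_j$ is what keeps these graded reductions well behaved.

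For (2) I would argue by hand in rank $\le 3$. First one identifies the ring $R[int(M)]$ explicitly from the geometry of the simplicial cone $\BR_+M\subset\BR^3$---its generators and its grading by a single coordinate---and then looks for shearing automorphisms of the ambient polynomial ring, fixing that coordinate, which restrict to $R[int(M)]$. In ranks $1,2,3$ the possible shapes of the cone are limited enough to settle this case by case, after which the induction-plus-moving-lemma of (1) applies verbatim; the normal rank-two statement recorded in the abstract supplies the crucial two-dimensional input. I expect the restriction $r\le 3$ to be essential at exactly this point: producing the required restricted automorphisms for the interior monoid is a genuinely low-dimensional combinatorial fact, and it is the lack of such a clean statement in higher rank that forces (1) to assume $M\in\CC(\Phi)$ outright rather than to derive it.
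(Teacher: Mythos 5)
Your proposal has genuine gaps in both parts. For part (1), a single induction on the rank with a lift that is unimodular modulo the augmentation ideal $A_+$ is not enough. The paper's key lemma (\ref{3l4}) needs two pieces of data simultaneously: an $s\in R$ (a non-zerodivisor in the reduced case) with $P_s$ free, and a unimodular element of $P/sA_+P$ --- unimodularity modulo $A_+$ alone does not feed into Lindel's graded machinery (\ref{3p4}), whose hypothesis is $O_P(p)+sA_+=A$. Producing an element unimodular mod $sA_+$ forces the paper into a \emph{double} induction on $d$ and $r$: $\dim R/sR=d-1$ gives $\Um(P/sP)\neq\emptyset$, induction on $r$ gives $\Um(P/A_+P)\neq\emptyset$, and these are patched over $P/(s,A_+)P$ in a fiber product, using transitivity of $\E(B[M_0]\op P_0)$ on unimodular elements (Dhorajia--Keshari) plus a separate $\GL_2$-surjectivity argument in the edge case $d=1$, rank $2$. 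Nothing in your plan supplies the ``mod $s$'' half of this data, and your own flagged obstacle --- whether the restricted shears give enough ``general position'' --- is real and is not how the argument closes. Indeed you misconstrue the role of the $\CC(\Phi)$-automorphisms: they are ring automorphisms, so transporting the pair $(P,p)$ by one of them changes nothing about the existence of a unimodular element; they cannot ``carry $p$ off the bad locus.'' In the paper, $p$ is moved inside $P$ by Lindel's $s$-dual submodule trick ($p\mapsto p+sgq$, Proposition \ref{3p1}) to achieve $\hh(O_P(p))>d$; then the leading coefficient ideal lemma (\ref{3l2}), $\hh(\gl(O_P(p)))\geq \hh(O_P(p))>d=\dim R$, produces $f\in O_P(p)$ with \emph{unit} highest coefficient; only at that point is the $\CC(\Phi)$-automorphism applied (\ref{3l7}), to make $f$ monic in $t_r$, so that $A/O_P(p)$ is integral over $R[M_0]/(O_P(p)\cap R[M_0])$ and Proposition \ref{3p4} lifts $\ol p$ to $\Um(P)$.

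For part (2), your case-by-case analysis of cone shapes misses the actual structure of the proof and stumbles on a basic point: $int(M)$ is in general not finitely generated, so one cannot simply exhibit generators and shears for $R[int(M)]$. The paper instead uses seminormality to identify $int(M)=int(\ol M)$ (Bruns--Gubeladze, Proposition 2.40), invokes Gubeladze's theorem that the interior of a rank-$\leq 3$ normal simplicial cone is a \emph{filtered union of truncated monoids}, descends the finitely generated module $P$ to $R[N]$ for a single truncated $N$, and then checks (via Gubeladze's Lemma 6.6, recorded as Example \ref{3r5}(3)) that such $N$ lies in $\CC(\Phi)$, so part (1) applies. The restriction $r\leq 3$ enters through that truncation theorem and Lemma 6.6, not through a classification of low-dimensional cones; your plan, which omits the filtered-union reduction and the $int(M)=int(\ol M)$ step entirely, would not produce a proof even in rank $2$.
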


The following result (\ref{3c11}) follows from (\ref{3t1}(1)).
When $R$ is a field, this result is due to Anderson \cite{An78}.

\begin{theorem}
  Let $R$ be a ring of dimension $d$ and $M\subset \BZ_+^2$ a
normal monoid of rank $2$. Then {\it Serre dim} $R[M]\leq d$.
\end{theorem}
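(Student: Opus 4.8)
The plan is to deduce the statement from Theorem~\ref{3t1}(1) by realizing $M$, up to isomorphism, as a monoid in the class $\CC(\Phi)$. Since isomorphic monoids give isomorphic algebras, {\it Serre dim} $R[M]$ depends only on $M$ as an abstract monoid, so I am free to discard the given inclusion $M\subset\BZ_+^2$ and choose whatever embedding is convenient. First I would use the standard description of a normal rank-$2$ monoid: writing $C=\BR_+M$ for the (two-dimensional, rational) cone it spans and $\mathrm{gp}(M)$ for its group, normality means $M=C\cap\mathrm{gp}(M)$, and $\mathrm{gp}(M)$ is torsion-free finitely generated of rank $2$, hence $\cong\BZ^2$. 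Identifying $\mathrm{gp}(M)$ with $\BZ^2$ gives $M\cong C\cap\BZ^2$, and a $\GL_2(\BZ)$ change of coordinates brings the two extremal rays of $C$ to the primitive vectors $(1,0)$ and $(p,q)$ with $q\geq 1$, $\gcd(p,q)=1$ and $0\leq p<q$ (the case $q=1$ being $M\cong\BZ_+^2$).

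Next I would pass to the embedding defined by the inner facet normals of $C$. With the above normalization these are $u_1=(q,-p)$ and $u_2=(0,1)$, and the map $\psi=(u_1,u_2)\colon\BZ^2\to\BZ^2$, $\psi(v_1,v_2)=(qv_1-pv_2,\,v_2)$, is injective with image a sublattice $L=\{(x,y):x+py\equiv 0\ (\mathrm{mod}\ q)\}$ of index $q$ in $\BZ^2$. Since $C=\{v:u_1(v)\geq 0,\ u_2(v)\geq 0\}$ and $M$ is normal, one checks $\psi(M)=\BZ_+^2\cap L=:M'$. Thus $M\cong M'$, the cone of $M'$ is the full first quadrant (so $M'\subset\BZ_+^2$ is integral, i.e. $M'$ is $\Phi$-simplicial), and $M'$ is seminormal, being normal.

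It then remains to check $M'\in\CC(\Phi)$. For $r=2$ the condition for $m=1$ is vacuous ($\eta$ is the identity), so the content is the $m=2$ case: for each $c$, to find $c_1>c$ with $\eta(t_1)=t_1+t_2^{c_1}$, $\eta(t_2)=t_2$ restricting to an $R$-automorphism of $R[M']$. Applying $\eta$ to a monomial with exponent $(a,b)\in M'$ produces monomials with exponents $(a,b)+j(-1,c_1)$ for $0\leq j\leq a$; these lie in the first quadrant, hence in $M'=\BZ_+^2\cap L$ precisely when $(-1,c_1)\in L$, that is when $pc_1\equiv 1\ (\mathrm{mod}\ q)$. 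As $\gcd(p,q)=1$ this residue class exists, so I may choose $c_1>c$ in it; the same condition governs $\eta^{-1}(t_1)=t_1-t_2^{c_1}$, so $\eta$ is indeed an automorphism of $R[M']$. Therefore $M'\in\CC(\Phi)$, Theorem~\ref{3t1}(1) yields {\it Serre dim} $R[M']\leq d$, and hence {\it Serre dim} $R[M]\leq d$. The hard part will be exactly this last step: recognizing that the given inclusion need not lie in $\CC(\Phi)$ (already $M=2\BZ_+^2\cong\BZ_+^2$ fails the shear test in its native embedding, since $(-1,c_1)\notin 2\BZ^2$), so one must first install the inner-normal embedding $M'=\BZ_+^2\cap L$ and then reduce shear-invariance of $R[M']$ to the single arithmetic condition $(-1,c_1)\in L$.
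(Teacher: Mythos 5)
For finitely generated $M$ your argument is correct and is essentially the paper's own route: the paper's proof of this statement (\ref{3c11}) goes through (\ref{3r5}(1),(2)) and (\ref{3t5}), citing (\cite{G2}, Lemma 1.3) to put $M$ in the form $((1,a_1),(0,a_2))\cap\BZ_+^2$ and declaring membership in $\CC(\Phi)$ ``easy to see.'' You rederive that normal form from scratch via inner facet normals and then actually carry out the suppressed verification, reducing shear-invariance to the single congruence $pc_1\equiv 1 \pmod q$, solvable since $\gcd(p,q)=1$; this, including your observation that the given embedding itself need not satisfy the $\CC(\Phi)$ test (your example $2\BZ_+^2$), is exactly the content behind the paper's assertion, and it checks out (integrality of $M'\subset\BZ_+^2$ from the finite index $q$ of $L$, the condition for $\eta^{-1}$ being the same congruence, and the vacuity of the $m=1$ condition).

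There is, however, a genuine gap: under the paper's blanket conventions monoids are only assumed commutative, cancellative and torsion-free, not finitely generated, and the paper's proof of (\ref{3c11}) devotes its second half precisely to the non-finitely-generated case. Your parenthetical ``(two-dimensional, \emph{rational}) cone'' is where finite generation is silently smuggled in: a normal rank-$2$ submonoid of $\BZ_+^2$ need not have a rational cone. For instance $M=\{(x,y)\in\BZ_+^2 : y\le \sqrt{2}\,x\}$ is normal of rank $2$ but not finitely generated (it contains lattice points of slope arbitrarily close to $\sqrt{2}$, while any finite generating set would bound all slopes strictly below $\sqrt{2}$); its cone has an irrational extremal ray, so no $\GL_2(\BZ)$ change of coordinates produces primitive ray generators $(1,0)$ and $(p,q)$, and $M$ is not isomorphic to any monoid in $\CC(\Phi)$, since $\Phi$-simplicial monoids are by definition finitely generated. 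The missing step is the paper's reduction: write $M=\bigcup_{\gl\in I}M_\gl$ as a filtered union of finitely generated submonoids; normality of $M$ forces $\ol M_\gl\subseteq M$, so $M=\bigcup_{\gl}\ol M_\gl$ with each $\ol M_\gl$ normal and finitely generated by (\cite{B-G}, Proposition 2.22); a finitely generated projective $R[M]$-module is then extended from $R[\ol M_\gl]$ for some $\gl$, where your argument applies, and a unimodular element extends along $R[\ol M_\gl]\ra R[M]$. With that paragraph prepended, your proof is complete; without it, it proves only the finitely generated case of the stated theorem.
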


The next result answers Question \ref{3q1} partially for
$1$-dimensional rings (see \ref{3t15}, \ref{3t16}). The proof uses the
techniques of Kang \cite{K}, Roy \cite{Roy82} and Gubeladze's
\cite{Gu90}. Let us recall two definitions. (i) A monoid $M$ is
called {\it $c$-divisible}, where $c>1$ is an integer, if $cX=m$ has a
solution in $M$ for all $m\in M$. All $c$-divisible monoids are
seminormal. (ii) Let $R$ be a ring, $\ol R$ the integral closure of
$R$ and $C$ the conductor ideal of $R\subset \ol R$. Then
$R$ is called {\it uni-branched} if for any
$\p \in \Spec R$ containing $C$, there is a unique $\mq \in \Spec \ol
R$ such that $\mq \cap R=\p$.

\begin{theorem}\label{1a}
Let $R$ be a ring of dimension $1$, $M$ a monoid and
$P$ a projective $R[M]$-module of rank $r$.

(i) If $M$ is $c$-divisible and $r\geq 3$, then $P\cong
\wedge^rP\op R[M]^{r-1}$.

(ii) If $R$ is a uni-branched affine algebra over an algebraically
closed field, then $P\cong \wedge^rP\op R[M]^{r-1}$.
\end{theorem}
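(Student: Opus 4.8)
The plan is to reduce both parts to one statement: that $P$ is stably isomorphic to $L\op R[M]^{r-1}$ for a suitable rank-one projective $L$, after which a cancellation theorem valid in rank $\geq 3$ promotes the stable isomorphism to an honest one. The determinant then identifies $L$ automatically, since $\wedge^r\!\big(R[M]^{r-1}\op L\big)\cong \wedge^{r-1}\!\big(R[M]^{r-1}\big)\ot L\cong L$, forcing $L\cong\wedge^r P$. This is where the hypothesis $r\geq 3$ is genuinely used: splitting off free summands one at a time via Serre's theorem stalls once the rank drops to $2$ (the Krull dimension of $R[M]$ is far larger than $1$, so naive Bass cancellation is unavailable), whereas the de-stabilization needed here rests on the transitivity of the elementary group $E_r(R[M])$ on $\Um_r(R[M])$, which holds precisely for $r\geq 3$, together with the cancellation technique of Roy \cite{Roy82}. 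Thus in each part the real work is to produce the stable splitting and to verify the cancellation input in the monoid setting.

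For (i) I would exploit $c$-divisibility to make $R[M]$ behave like the coordinate ring of a cone, so that it is $K$-theoretically indistinguishable from $R$. Writing $M$ as the filtered union of its finitely generated submonoids $N$ and using that every element of $N$ has all its $c$-power roots in $M$, one presents $R[M]=\varinjlim R[N]$ with transition maps the Frobenius-type endomorphisms induced by multiplication by $c$ on $N$; since a finitely presented module descends to a finite stage, $P$ is pulled back along a power of this Frobenius from a projective over some $R[N]$. The decisive step, and the principal obstacle, is Gubeladze's nilpotence phenomenon \cite{Gu90}: the Frobenius endomorphism contracts the non-extended part, giving $K_0(R[M])\cong K_0(R)$ and hence $[P]=[P_0\ot_R R[M]]$, where $P_0:=P\ot_{R[M]}R$ via the augmentation $R[M]\surj R$. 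As $\dim R=1$, Serre's theorem and Bass cancellation yield $P_0\cong\wedge^r P_0\op R^{\,r-1}$ over $R$, so $P$ is stably isomorphic to $L\op R[M]^{r-1}$ with $L=\wedge^r P_0\ot_R R[M]$; cancellation in rank $\geq 3$ then finishes the argument.

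For (ii) the monoid $M$ is arbitrary, so seminormality is unavailable and the argument must be geometric in $R$. Let $\ol R$ be the normalization and $C$ the conductor; because $R$ is one-dimensional, affine over an algebraically closed field and uni-branched, $\ol R$ is a regular (Dedekind) ring, the quotients $R/C$ and $\ol R/C$ are Artinian, and $R/C\inj \ol R/C$ is subintegral (a bijection on spectra with trivial residue extensions). The conductor square exhibits $R$ as the fibre product $\ol R\times_{\ol R/C}R/C$, and base change by $R[M]$ keeps this a Milnor patching square, so a projective $R[M]$-module is glued from projectives over $\ol R[M]$ and $(R/C)[M]$ along $(\ol R/C)[M]$. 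I would first settle the regular base: following Kang \cite{K} and Roy \cite{Roy82}, over $\ol R[M]$ a rank $\geq 3$ projective is stably of the form $L\op\ol R[M]^{r-1}$ and is cancellative. The main obstacle is the descent across the conductor: the subintegrality of $R/C\inj\ol R/C$ must be shown to induce an isomorphism on the relevant orbit sets of unimodular rows, so that the unimodular element and the elementary automorphisms glue and the splitting descends from $\ol R[M]$ to $R[M]$ for an arbitrary, possibly non-seminormal, monoid $M$. It is exactly in controlling $\Um_r$ and the $E_r$-action along the conductor that uni-branchedness, the algebraically closed field, and the bound $r\geq 3$ are all needed.
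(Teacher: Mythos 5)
Your proposal for (i) breaks at its central step. The claim that $K_0(R[M])\cong K_0(R)$ for $c$-divisible $M$ via Gubeladze's nilpotence phenomenon is only available for \emph{regular} coefficient rings containing a field; it fails for exactly the singular one-dimensional rings at issue here. Concretely, take $R=k[t^2,t^3]$ (the cusp) and $M=\BQ_+$, which is $c$-divisible: then $R[M]=\varinjlim R[u]$ along Frobenius maps $u\mapsto u^c$, and since $R$ is not seminormal one computes from the conductor square that $\mathrm{Pic}(R[u])/\mathrm{Pic}(R)\cong uk[u]$, on which $u\mapsto u^c$ acts injectively ($g(u)\mapsto g(u^c)$); as $\mathrm{Pic}$ commutes with filtered colimits, $\mathrm{Pic}(R[M])\supsetneq\mathrm{Pic}(R)$ and $K_0(R[M])\neq K_0(R)$. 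Worse, your argument would conclude that $\wedge^rP$ is always extended from $R$ (your $L=\wedge^rP_0\otimes_R R[M]$), and taking $P=L'\oplus R[M]^2$ with $L'$ a non-extended line bundle as above gives an outright contradiction once your own cancellation step is applied; this is precisely why the theorem is stated as $P\cong\wedge^rP\oplus R[M]^{r-1}$ with no extendedness assertion. The paper's route is entirely different: it first disposes of normal $R$ by Swan \cite{Swan}; for $R$ with finite normalization it patches over the conductor square formed by $\ol R[M]$, $(R/C)[M]$, $(\ol R/C)[M]$, using Gubeladze's freeness theorem \cite{Gu} over the zero-dimensional ring $R/C$ (seminormality of the $c$-divisible $M$ enters here) and $\SL_r((\ol R/C)[M])=\E_r((\ol R/C)[M])$ for $r\geq 3$ from \cite{Gu90} — this is where $r\geq 3$ is actually used, not transitivity of $\E_r(R[M])$ on $\Um_r(R[M])$; finally, $R$ without finite normalization is reached by Roy's $s$-adic completion patching (\ref{3p8}), a reduction your plan never engages.

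For (ii) your skeleton (conductor Milnor square, settle $\ol R[M]$, descend) does match the paper's, but two genuine gaps remain. First, you impose $r\geq 3$ and rest the descent on elementary transitivity on unimodular rows; the theorem makes no rank restriction, and for an \emph{arbitrary} monoid $M$ — not $c$-divisible, not seminormal, not even finitely generated — no such transitivity, nor $\SL=\E$, nor the cancellation over $\ol R[M]$ that you invoke "following Kang and Roy," is available. Second, the step you yourself flag as the main obstacle (controlling the gluing data along the subintegral map $R/C\inj\ol R/C$) is exactly what you leave unproved, and it is the heart of the paper's argument: Kang's lemma (\ref{3l18}) shows uni-branchedness over an algebraically closed field yields $\ol R/C=R/C+a_1R/C+\cdots+a_mR/C$ with $a_i\in\sqrt{C}$, so the two rings have the same reduction, and then (\ref{3t17}) every $\gs\in\SL_n((\ol R/C)[M])$ factors as $\gs_1\gs_2$ with $\gs_1\in\SL_n((R/C)[M])$ and $\gs_2\equiv \mathrm{Id}$ modulo nilpotents, hence $\gs_2\in\E_n((\ol R/C)[M])$ — in every size $n$. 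This factorization simultaneously replaces the $\SL=\E$ input and the unimodular-row analysis in the gluing, which is why the paper obtains all ranks $r$ with no cancellation theorem over $R[M]$ at all; your plan would at best prove a weaker, rank-restricted statement, and its remaining inputs over $\ol R[M]$ have no proof route for general $M$.
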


If $R$ is a $1$-dimensional anodal ring with finite seminormalization,
then (\ref{1a}(ii)) is due to Sarwar (\cite{Sarwar}, Theorem
1.2). Note that if $k$ is an algebraically closed field of
characteristic $2$, then node $k[X,Y]/(X^2-Y^2-Y^3)$ is not anodal but
is uni-branched, by Kang (\cite{K}, Example 2).

At the end, we give some applications 
to minimum number of generators of projective modules.

\section{Preliminaries}
 
Let $A$ be a ring and $Q$ an $A$-module. We say $p\in Q$ is unimodular
if the order ideal $O_Q(p)=\{\phi(p) \,|\,\, \phi \in \Hom(Q,A)\}$ equals
$A$.  The set of all unimodular elements in $Q$ is denoted by
$\Um(Q)$. We write $\E_n(A)$ for the group generated by set of all
$n\times n$ elementary matrices over $A$ and $\Um_n(A)$ for
$\Um(A^n)$. We denote by $\Aut_A(Q )$, the group of all $A$-automorphisms
of $Q$.

For an ideal $J$ of $A$, we denote by $\E(A \oplus Q, J)$, the subgroup
of $\Aut_A (A\oplus Q )$ generated by all the automorphisms
$\Delta_{a\phi}= \bigl( \begin{smallmatrix} 1 & a\phi \\ 0&id_Q
\end{smallmatrix} \bigr)$ and $\Gamma_{q}= \bigl( \begin{smallmatrix}
1 & 0 \\ q&id_Q
\end{smallmatrix} \bigr)$ with $a\in J$, $\phi \in Q^*$ and $q\in Q$. 
Further, we shall write $\E(A\oplus Q)$ for $\E(A\oplus Q, A)$.
We denote by $\Um(A\oplus Q, J)$ the set of all $(a,q) \in \Um(A\oplus
Q )$ with $a \in 1 + J$ and $q \in JQ$.

We state some results of Lindel \cite{L95} for later use.
 
\begin{proposition}
(Lindel \cite{L95}, 1.1) Let $A$ be a ring and $Q$ an $A$-module. Let $Q_s$
  be free of rank $r$ for some $s\in A$. Then there exist
  $p_1,\ldots,p_r \in Q,\,\, \phi_1,\ldots,\phi_r\in Q^*$ and $t\geq
  1$ such that following holds:

$(i)$ $0:_A s'A=0:_A{s'}^{2}A$, where $s'=s^t$.
 
$(ii)$ $s'Q \subset F$ and $s'Q^*\subset G$, where $F=\sum_{i=1}^r
 Ap_i\subset Q$ and $G=\sum _{i=1}^r A\phi_i \subset Q^*$.
  
$(iii)$ the matrix $(\phi_i(p_j))_{1\leq i,j\leq r}=diagonal\,(s',\ldots,s')$. We
 say $F$ and $G$ are $s'$-dual submodules of $Q$ and $Q^*$
 respectively.
 \end{proposition}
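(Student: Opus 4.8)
The plan is to descend a basis and its dual from the free localization $Q_s$ down to $Q$ and $Q^*$, clear denominators uniformly, and then repair the resulting data by a single large power of $s$. Concretely, I would first fix an $A_s$-basis $\wt e_1,\dots,\wt e_r$ of $Q_s$ together with its dual basis $\wt e_1^{*},\dots,\wt e_r^{*}\in (Q_s)^{*}$. Since $A$ is Noetherian and $Q$ is finitely generated, $Q$ is finitely presented, so the natural map $(Q^{*})_s\to (Q_s)^{*}$ is an isomorphism and $Q^{*}$ is again finitely generated; this is exactly what lets one compute the dual after localization. Writing $\pi\colon Q\to Q_s$ for the localization map, I would clear denominators to a common exponent $N$, producing $\hat p_i\in Q$ with $\pi(\hat p_i)=s^{N}\wt e_i$ and $\hat\psi_i\in Q^{*}$ whose image in $(Q_s)^{*}$ is $s^{N}\wt e_i^{*}$.

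With these approximate elements in hand, the three conclusions become assertions about killing $s$-torsion, and the Noetherian/finite-generation hypotheses make every relevant exponent finite. For (iii), evaluating shows that the image of $\hat\psi_i(\hat p_j)$ in $A_s$ is $s^{2N}\delta_{ij}$, so $\hat\psi_i(\hat p_j)=s^{2N}\delta_{ij}+z_{ij}$ with each $z_{ij}$ killed by a fixed power $s^{c}$. For (ii), expanding an arbitrary $q\in Q$ in the basis and clearing denominators on a finite generating set yields a fixed exponent $a$ with $s^{2N+a}Q\subseteq\sum_i A\hat p_i$, and the dual computation yields a fixed exponent $a'$ with $s^{2N+a'}Q^{*}\subseteq\sum_i A\hat\psi_i$. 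For (i), the ascending chain $0:_A s\subseteq 0:_A s^{2}\subseteq\cdots$ stabilizes at some exponent, beyond which $0:_A s^{n}$ is independent of $n$.

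I would then set $p_i=s^{\,j}\hat p_i$, $\phi_i=s^{\,j}\hat\psi_i$, and $s'=s^{2(j+N)}$ for a single large $j$. Scaling both families by the same power is the key: $\phi_i(p_j)=s^{2j}\hat\psi_i(\hat p_j)=s^{2(j+N)}\delta_{ij}+s^{2j}z_{ij}$, so once $2j\ge c$ the error term vanishes and (iii) holds exactly. The containments survive because $F=\sum_i A p_i=s^{\,j}\sum_i A\hat p_i$ and $G=\sum_i A\phi_i=s^{\,j}\sum_i A\hat\psi_i$; hence $s^{2(j+N)}Q=s^{\,j}\cdot s^{\,2N+j}Q\subseteq s^{\,j}\sum_i A\hat p_i=F$ as soon as $j\ge a$, and symmetrically $j\ge a'$ gives $s'Q^{*}\subseteq G$, while taking $j$ beyond the annihilator-stabilization exponent secures (i) for $s'=s^{2(j+N)}$, i.e. $t=2(j+N)$.

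The main obstacle is precisely the bookkeeping in this last step: (ii) and (iii) pull in opposite directions, since making the off-diagonal errors disappear wants a large power of $s$ multiplying the inner products, whereas the containments want $s'$ to lie inside modules that have themselves been shrunk by that same power. The resolution, and the one genuinely essential use of the hypotheses, is that finite generation of $Q$ (hence of $Q^{*}$) together with the Noetherian property of $A$ forces all the clearing exponents $c,a,a'$ and the annihilator-stabilization exponent to be finite, so a single symmetric scaling by a sufficiently large $s^{\,j}$ meets all four requirements at once. I would finish by recording that $F$ and $G$ are then $s'$-dual submodules of $Q$ and $Q^{*}$ in the asserted sense.
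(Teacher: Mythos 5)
The paper does not prove this proposition at all---it is quoted directly from Lindel (\cite{L95}, 1.1) as a black box---so there is no in-paper argument to compare against, and your proof must stand on its own; it does. Your argument is correct and is essentially Lindel's standard one: descend a basis and its dual basis from $Q_s$ (using $(Q^*)_s\cong (Q_s)^*$, valid since $Q$ is finitely generated over a Noetherian ring, hence finitely presented), clear denominators to get $\hat p_i$, $\hat\psi_i$, observe that all error terms and containment defects are $s$-power torsion with uniformly bounded exponents, and then the symmetric rescaling $p_i=s^j\hat p_i$, $\phi_i=s^j\hat\psi_i$, $s'=s^{2(j+N)}$ for one sufficiently large $j$ simultaneously secures $(i)$, $(ii)$ and $(iii)$, exactly resolving the tension you correctly identify between shrinking $F$, $G$ and enlarging the power of $s$ in the diagonal matrix.
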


\begin{proposition}\label{3p1}
(Lindel \cite{L95}, 1.2, 1.3) Let $A$ be a ring and $Q$ an $A$-module. Assume
  $Q_s$ is free of rank $r$ for some $s\in A$. Let $F$ and $G$ be
  $s$-dual submodules of $Q$ and $Q^*$ respectively. Then

 $(i)$ for $p\in Q$,
  there exists $q\in F$ such that $\hh (O_Q(p+sq)A_s)\geq r$.

  $(ii)$ If $Q$ is projective $A$-module and $\ol p \in \Um(Q/sQ)$, then there
  exists $q\in F$ such that $\hh (O_Q(p+sq))\geq r$.
\end{proposition}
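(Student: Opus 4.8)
The plan is to pass to the localization $A_s$, where $Q_s$ is free of rank $r$, and to reduce both parts to a general-position statement about the coordinates of $p$. The $s$-dual structure is what makes this possible: by definition $F=\sum_i Ap_i$, $G=\sum_i A\phi_i$ and $\phi_i(p_j)=s\,\delta_{ij}$. Since $sQ\subseteq F\subseteq Q$ we get $Q_s=F_s=\sum_i A_s p_i$, and as the matrix $(\phi_i(p_j))=s\cdot\mathrm{id}$ is invertible over $A_s$, the $p_i$ form a basis of $Q_s$ while the $\phi_i$ are $s$ times the dual basis. Hence for every $x\in Q$ one has $O_Q(x)A_s=(\phi_1(x),\ldots,\phi_r(x))A_s$. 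Writing $q=\sum_j c_j p_j\in F$ with $c_j\in A$ gives $\phi_i(p+sq)=\phi_i(p)+s^2c_i$, so
\[
O_Q(p+sq)A_s=\big(\phi_1(p)+s^2c_1,\ \ldots,\ \phi_r(p)+s^2c_r\big)A_s.
\]
Part (i) is therefore the purely ideal-theoretic problem of choosing $c_1,\ldots,c_r\in A$ so that the right-hand ideal has height $\geq r$ in $A_s$.

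For this I would run the standard inductive general-position argument, fixing the coordinates one at a time. Assume $c_1,\ldots,c_{j-1}$ have been chosen so that every minimal prime of $I_{j-1}:=(\phi_1(p)+s^2c_1,\ldots,\phi_{j-1}(p)+s^2c_{j-1})A_s$ has height $\geq j-1$, and let $P_1,\ldots,P_m$ be those of height exactly $j-1$. Each $P_k$ is a prime of $A$ missing $s$, so writing $\p_k:=P_k\cap A$ the bad condition $\phi_j(p)+s^2c_j\in P_k$ reads $\phi_j(p)+s^2c_j\in\p_k$; modulo $\p_k$ the element $s^2$ is a unit of the domain $A/\p_k$, so this confines $c_j$ to at most a single coset of $\p_k$. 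By prime avoidance I can choose one $c_j\in A$ lying outside all these cosets. Adjoining $\phi_j(p)+s^2c_j$ then strictly contains each $P_k$, so by Krull's principal ideal theorem every minimal prime of $I_j$ has height $\geq j$. Iterating up to $j=r$ yields $\hh(O_Q(p+sq)A_s)\geq r$, and whenever $\dim A_s$ is exceeded the avoidance simply produces the unit ideal, whose height counts as $\geq r$. The main obstacle is precisely this avoidance step: one is only permitted to add terms $s^2c_j$ with $c_j\in A$, and one must circumvent the classical breakdown of prime avoidance over small residue fields. This is exactly the point at which Lindel's careful construction of the $s$-dual submodules $F,G$ is used.

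Part (ii) then follows formally from part (i) and the unimodularity hypothesis. Because $Q$ is projective, reduction modulo $s$ surjects $Q^*$ onto $(Q/sQ)^*$, so $\ol p\in\Um(Q/sQ)$ translates into $O_Q(p)+sA=A$. Since $p+sq\equiv p\pmod{sQ}$ we also have $O_Q(p+sq)+sA=A$, whence no prime of $A$ containing $O_Q(p+sq)$ can contain $s$. Every such prime therefore survives in $A_s$ with unchanged height, giving $\hh(O_Q(p+sq))=\hh(O_Q(p+sq)A_s)\geq r$ by part (i), as required.
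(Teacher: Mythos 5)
Your reduction is sound, and it is worth noting at the outset that the paper offers no internal proof to compare against: this proposition is imported verbatim from Lindel (\cite{L95}, 1.2, 1.3) with only the citation. Measured on its own terms, your argument has the right architecture. Since $sQ\subseteq F$, the $p_i$ do form an $A_s$-basis of $Q_s$ with $\phi_i/s$ the dual basis, so $O_Q(x)A_s=(\phi_1(x),\ldots,\phi_r(x))A_s$ (and for part (i) one only ever needs the trivial inclusion $(\phi_1(x),\ldots,\phi_r(x))\subseteq O_Q(x)$, since enlarging an ideal can only increase height); the computation $\phi_i(p+sq)=\phi_i(p)+s^2c_i$ is correct; the height-climbing induction is correct, since a minimal prime of $I_j$ contains some minimal prime $P_k$ of $I_{j-1}$ and, by the choice of $c_j$, contains it strictly when $\hh P_k=j-1$; and the deduction of (ii) from (i) — surjectivity of $Q^*\ra (Q/sQ)^*$ for projective $Q$ giving $O_Q(p+sq)+sA=A$, so that every prime over $O_Q(p+sq)$ avoids $s$ and heights are computed unchanged in $A_s$ — is exactly right.

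The one genuine gap is the step you yourself flag: ``by prime avoidance I can choose $c_j$ outside all these cosets.'' Classical prime avoidance says nothing about cosets, and you compound the uncertainty by asserting both that there is a ``classical breakdown over small residue fields'' and that Lindel's $s$-dual construction is what circumvents it — neither is accurate. The $s$-dual submodules have already done all their work (reducing the module problem to an ideal problem and keeping the correction $q$ inside $F$); the avoidance step needs no help from them and no residue-field hypothesis, because the primes $\p_1,\ldots,\p_m$ arising at stage $j$ are the height-$(j-1)$ minimal primes of the single ideal $I_{j-1}$, hence pairwise incomparable. For pairwise incomparable primes, coset avoidance holds over any ring: choose $e_k\in \bigl(\bigcap_{l\neq k}\p_l\bigr)\setminus \p_k$ (possible since $\p_l\not\subseteq\p_k$), and set $c_j=\sum_k e_kt_k$ with $t_k=0$ if the forbidden residue $d_k$ is nonzero modulo $\p_k$ and $t_k=1$ otherwise; then $c_j\equiv e_kt_k\not\equiv d_k \pmod{\p_k}$ for every $k$, even when $A/\p_k=\BF_2$. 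Separately, your parenthetical claim that ``$s^2$ is a unit of the domain $A/\p_k$'' is false in general — $s^2$ is merely a nonzero element there — but cancellation in the domain already gives the at-most-one-coset conclusion you need, so that misstatement is harmless. With the avoidance lemma supplied and the misattribution removed, your proof is complete and is, in substance, Lindel's.
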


\begin{proposition}\label{3p3}
(Lindel \cite{L95}, 1.6) Let $Q$ be a module over a positively graded ring
  $A=\oplus_{i\geq 0}A_i$ and $Q_s$ be free for some $s\in R=A_0$. Let
  $T\subseteq A$ be a multiplicatively closed set of homogeneous
  elements. Let $p\in Q$ be such that $p_{T(1+sR)}\in \Um
  (Q_{T(1+sR)})$ and $s\in rad(O_Q(p)+A_+)$, where $A_+=\oplus_{i\geq
    1}A_i$. Then there exists $p'\in p+sA_+Q$ such that $p'_T\in
  \Um(Q_T)$.
\end{proposition}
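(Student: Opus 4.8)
The plan is to reduce to the distinguished open $D(s)=\Spec A_s$, where $Q_s$ is free, and there run a Plumstead--Bhatwadekar--Roy type moving argument with the correction confined to the positive-degree direction $A_+Q$, following Lindel's method.

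First I would translate the data. Since $O_Q(p)=\{\phi(p)\mid\phi\in Q^*\}$ is an ideal, the hypothesis $p_{T(1+sR)}\in\Um(Q_{T(1+sR)})$ means $O_Q(p)\cap T(1+sR)\neq\varnothing$, so there are $t\in T$ and $c\in R$ with $t(1+sc)\in O_Q(p)$; the second hypothesis gives $s^N\in O_Q(p)+A_+$ for some $N$; and the goal is a $q\in A_+Q$ with $O_Q(p+sq)\cap T\neq\varnothing$. The key first observation is that an admissible correction does not change the order ideal modulo $s$: for every $\phi\in Q^*$ one has $\phi(p+sq)\equiv\phi(p)\pmod{sA}$, whence $O_Q(p+sq)+sA=O_Q(p)+sA$, and since $t\equiv t(1+sc)\pmod{sA}$ we get $t\in O_Q(p+sq)+sA$ for every $q$. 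Consequently, if a prime $\mq$ contains $O_Q(p+sq)$ and misses $T$, then $t\notin\mq$ forces $s\notin\mq$; that is, every prime obstructing unimodularity over $A_T$ necessarily lies in $D(s)$. So it suffices to choose $q\in A_+Q$ that clears all such primes after inverting $s$.

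On $D(s)$ the hypotheses acquire a clean geometric meaning. Since $s$ is a unit there, the second hypothesis reads $O_Q(p)_s+(A_+)_s=A_s$: the locus $V(O_Q(p))$ avoids $V(A_+)$ on $D(s)$, so $A_+\not\subseteq\mq$ for every $\mq\in D(s)$ containing $O_Q(p)$, and moving $p$ along $A_+Q$ is nondegenerate at exactly the primes that must be removed. The first hypothesis in turn confines the primes of $D(s)$ that contain $O_Q(p)$ but miss $T$ to the small closed set $V(1+sc)$, since $t\notin\mq$ forces $1+sc\in\mq$. Feeding these two facts into the dual-basis construction (\cite{L95}, 1.1) and Proposition \ref{3p1}, I would modify $p$ by a correction $sq$ with $q\in A_+Q$---a graded refinement of the height-raising correction of Proposition \ref{3p1}, which produces $\hh\big(O_Q(p+sq)A_s\big)\geq r$---so that over $A_s$ the order ideal is driven out of every prime that misses $T$. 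Together with $t\in O_Q(p+sq)+sA$ this forces a genuine element of $T$ into $O_Q(p+sq)$, giving $p'=p+sq$ with $p'_T\in\Um(Q_T)$.

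The main obstacle is keeping the correction inside the positive-degree submodule $A_+Q$. The $s$-dual submodules $F=\sum Ap_i$ and $G=\sum A\phi_i$, and hence the correction furnished by Proposition \ref{3p1}, live only after inverting $s$ and are a priori supported in $F$ rather than in $A_+Q$; solving the relevant equations naively lands one in $Q_s$ with powers of $s$ in the denominator, whereas the conclusion demands an honest $q\in A_+Q$. Reconciling the two is exactly where the positive grading enters: one clears the $s$-denominators against the $A_+$-filtration, raising the degree of the correction until it lies in $sA_+Q$. The hypothesis $s\in\rad(O_Q(p)+A_+)$ is what makes this possible---it is both the source of the nondegeneracy of $A_+$-corrections along the obstructing locus and the finiteness input that terminates the clearing after boundedly many graded steps. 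Carrying out this bookkeeping is the crux of the argument.
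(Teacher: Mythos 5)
Two things up front. First, the paper does not prove this proposition at all: it is quoted verbatim from Lindel (\cite{L95}, 1.6) as a preliminary, so the only meaningful benchmark is Lindel's original argument and the way the paper later uses the statement. Second, measured against that benchmark, your preparatory reductions are sound and are indeed the easy part of Lindel's proof: the equality $O_Q(p+sq)+sA=O_Q(p)+sA$ (valid since $\phi(p+sq)\equiv\phi(p)\bmod sA$ for all $\phi\in Q^*$), the consequence that every prime $\mq\supseteq O_Q(p+sq)$ with $\mq\cap T=\varnothing$ satisfies $s\notin\mq$, and the observation that on $D(s)$ the hypothesis $s\in\rad(O_Q(p)+A_+)$ forces $A_+\not\subseteq\mq$ for any obstructing prime (note this persists under $p\mapsto p+sq$ with $q\in A_+Q$, because $\phi(A_+Q)\subseteq A_+$ gives $O_Q(p+sq)+A_+=O_Q(p)+A_+$). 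All of that is correct and correctly deployed.

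The genuine gap is the central step, which you assert rather than prove: you propose to get $p'_T\in\Um(Q_T)$ from ``a graded refinement of the height-raising correction of Proposition \ref{3p1}.'' Height-raising cannot deliver this conclusion. Since $Q_s$ is free of rank $r$, the ideal $O_Q(p')A_s$ is generated by the $r$ coordinates of $p'$, so its height is at most $r$; Proposition \ref{3p1} can push it up to exactly $r$, but whenever $\dim A_s>r$ (and no hypothesis of Lindel's 1.6 relates $r$ to $\dim A$) there remain primes in $D(s)$ of larger height containing $O_Q(p')$ and missing $T$. The degenerate case $T=\{1\}$ makes this vivid: there the conclusion demands an honest unimodular element of $Q$, i.e.\ $O_Q(p')=A$, which no amount of height information of the form $\hh(O_Q(p')A_s)\geq r$ can produce. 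You also misidentify the crux as the bookkeeping of clearing $s$-denominators to land the correction in $A_+Q$; that is routine, whereas the real engine must be something that converts ``height $\geq r$ plus structure'' into actual unimodularity. In Lindel's paper that engine is a graded local--global induction (his intermediate lemmas leading to 1.6), exploiting the grading of $A$ itself, not the dual-basis height lemmas alone; and in the present paper's application (Lemma \ref{3l4}) the same bridge is supplied by a completely different ingredient, namely the integrality hypothesis of Proposition \ref{3p4}, obtained there via a monic polynomial in $O_P(p)$. Your sketch contains no analogue of either mechanism, so as it stands it proves only the reduction to $D(s)$, not the proposition.
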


\begin{proposition}\label{3p4}
(Lindel \cite{L95}, 1.8)
 Under the assumptions of (\ref{3p3}),
 let $p\in Q$ be such that $O_Q(p)+sA_+=A$ and $A/O_Q(p)$
 is an integral extension of $R/(R\cap O_Q(p))$. Then there exists
 $p'\in$ $\Um(Q)$ with $p'-p\in sA_+Q$.
\end{proposition}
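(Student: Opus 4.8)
The plan is to reduce the statement to a single application of \ref{3p3} with the trivial multiplicative set $T=\{1\}$, for which $Q_T=Q$ and $\Um(Q_T)=\Um(Q)$. For this choice one has $T(1+sR)=1+sR$, so the hypotheses of \ref{3p3} collapse to the two conditions $s\in\rad(O_Q(p)+A_+)$ and $p_{1+sR}\in\Um(Q_{1+sR})$, while its conclusion is precisely the existence of $p'\in p+sA_+Q$ with $p'\in\Um(Q)$. Thus it suffices to verify these two conditions, after which \ref{3p3} finishes the proof verbatim.

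The first condition is immediate: since $sA_+\subseteq A_+$, the hypothesis $O_Q(p)+sA_+=A$ forces $O_Q(p)+A_+=A$, and hence $\rad(O_Q(p)+A_+)=A\ni s$. The second condition is where the integral-extension hypothesis does the real work, and I expect this to be the only substantive step. First I would read $O_Q(p)+sA_+=A$ modulo $O_Q(p)$: writing $\ol{(\,\cdot\,)}$ for images in $D:=A/O_Q(p)$, it says $\ol s\,\ol{A_+}=D$, so in particular $\ol s$ is a unit of $D$. Now $\ol s$ lies in the subring $C:=R/(R\cap O_Q(p))$, which embeds in $D$ because the kernel of $R\ra A/O_Q(p)$ is exactly $R\cap O_Q(p)$, and $D$ is integral over $C$ by assumption. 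A unit of an integral extension that already lies in the base ring is a unit of the base ring (clear denominators in an integral equation for its inverse), so $\ol s$ is a unit of $C$. Consequently there is $r\in R$ with $1-sr\in R\cap O_Q(p)\subseteq O_Q(p)$.

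The element $1-sr$ lies in $1+sR$, so it becomes invertible in the localization $A_{1+sR}$; since it also lies in $O_Q(p)$, the ideal $O_Q(p)$ meets the units of $A_{1+sR}$, giving $O_Q(p)A_{1+sR}=A_{1+sR}$ and therefore $p_{1+sR}\in\Um(Q_{1+sR})$. This verifies the second condition, and \ref{3p3} applied with $T=\{1\}$ then yields $p'\in p+sA_+Q$ with $p'\in\Um(Q)$, as desired. The conceptual heart of the argument is thus the passage in the previous paragraph: the integral-extension hypothesis is exactly the device that promotes the unit $\ol s\in D$ down to a unit in the degree-zero base $C$, thereby converting the graded coprimality $O_Q(p)+sA_+=A$ into genuine unimodularity of $p$ over $A_{1+sR}$; everything else is formal manipulation or is already packaged inside \ref{3p3}.
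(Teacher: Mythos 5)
Your proposal is correct, but note that the paper itself offers no proof of this statement: it is quoted verbatim as Lindel (\cite{L95}, 1.8), so the only comparison available is with Lindel's original argument, which your reduction essentially reproduces. Each step checks out. Taking $T=\{1\}$ in (\ref{3p3}) is legitimate ($1$ is homogeneous and $T(1+sR)=1+sR$), and the first hypothesis is indeed trivial since $O_Q(p)+A_+\supseteq O_Q(p)+sA_+=A$. The substantive step is also sound: the image $\ol s$ of $s$ in $D=A/O_Q(p)$ is a unit because $\ol s\,\ol{A_+}\,D=D$; the kernel of $R\ra D$ is exactly $R\cap O_Q(p)$, so $C=R/(R\cap O_Q(p))$ embeds in $D$; and the standard integrality argument (multiplying an integral equation for $\ol s^{-1}$ by a suitable power of $\ol s$) puts $\ol s^{-1}$ in $C$, yielding $r\in R$ with $1-sr\in O_Q(p)$. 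The final inference also goes the right way: for any $\phi\in Q^*$ the induced functional on $Q_{1+sR}$ sends $p/1$ to $\phi(p)/1$, so the image of $O_Q(p)$ is contained in the order ideal of $p_{1+sR}$; since $1-sr$ is a unit of $A_{1+sR}$ lying in $O_Q(p)$, this forces $p_{1+sR}\in\Um(Q_{1+sR})$, and (\ref{3p3}) delivers $p'\in p+sA_+Q$ with $p'\in\Um(Q)$. No finiteness or projectivity of $Q$ beyond the standing assumptions of (\ref{3p3}) is needed anywhere, so the proof is complete as written.
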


The following result is due to Amit Roy (\cite{Roy82}, Proposition 3.4).

\begin{proposition}\label{3p8}
Let $A,B$ be two rings with $f:A\ra B$ a ring homomorphism. 
Let $s\in A$ be non-zerodivisor such that $f(s)$ is a non-zerodivisor in $B$.
Assume that we have the following cartesian square.
\[
\xymatrix{
A \ar@{->}[r]^{f}
     \ar@{->}[d]
& B 
     \ar@{->}[d]
\\
A_s \ar@{->}[r]^{f_s}
     & B_{f(s)}      
}
\]
Further assume that $\SL_r(B_{f(s)})=\E_r(B_{f(s)})$ for some $r>0$. Let $P$ and $Q$ be two projective
$A$-modules of rank $r$ such that $(i)$ $\wedge^rP\cong \wedge^rQ$, $(ii)$ $P_s$ and $Q_s$ are free over $A_s$,
$(iii)$ $P\ot_{A}B\cong Q\ot_A B $ and $Q\ot_A B$ has a unimodular element.
Then $P\cong Q$.

\end{proposition}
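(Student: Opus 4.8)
The plan is to prove the statement by Milnor-style patching over the given Cartesian square, consuming the hypotheses $\SL_r(B_{f(s)})=\E_r(B_{f(s)})$, $\wedge^rP\cong\wedge^rQ$ and the unimodular element of $Q\ot_AB$ only at the end, in order to trivialize the patching datum. The foundational observation is that, since $s$ and $f(s)$ are non-zerodivisors and $P$ is projective, homomorphisms patch over the square: $\Hom_A(P,Q)$ is the fibre product of $\Hom_{A_s}(P_s,Q_s)$ and $\Hom_B(P\ot_AB,Q\ot_AB)$ over $\Hom_{B_{f(s)}}(P\ot_AB_{f(s)},Q\ot_AB_{f(s)})$. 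Consequently, to construct an isomorphism $P\iso Q$ it suffices to produce an isomorphism $\ga\colon P_s\iso Q_s$ and an isomorphism $\gb\colon P\ot_AB\iso Q\ot_AB$ whose base changes to $B_{f(s)}$ coincide; a compatible pair of isomorphisms automatically glues to an isomorphism over $A$, its inverse being glued from the inverses.

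Next I set up the comparison. By $(ii)$ both $P_s$ and $Q_s$ are free of rank $r$; fixing bases identifies each with $A_s^r$, giving an evident isomorphism $\ga$ represented by the identity. For $\gb$ I take the isomorphism supplied by $(iii)$. After base change to $B_{f(s)}$ both sides become free of rank $r$, so the failure of $\ga$ and $\gb$ to agree is recorded by a single matrix $\gs\in\GL_r(B_{f(s)})$. The remaining freedom is to replace $\ga$ by $\ga\circ\tau$ for $\tau\in\GL_r(A_s)=\Aut_{A_s}(P_s)$ and to replace $\gb$ by $\gd\circ\gb$ for $\gd\in\Aut_B(Q\ot_AB)$; these alter $\gs$ by right multiplication by the image of $\GL_r(A_s)$ and by left multiplication by the image of $\Aut_B(Q\ot_AB)$. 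Thus the proposition reduces to showing that $\gs$ lies in the product of these two images inside $\GL_r(B_{f(s)})$.

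The three structural hypotheses are then used as follows. First, $\wedge^rP\cong\wedge^rQ$, via the patching description of the rank-one modules $\wedge^rP$ and $\wedge^rQ$, forces $\det\gs$ into the subgroup of $B_{f(s)}^*$ generated by the images of $A_s^*$ and $B^*$; since the unimodular element of $Q\ot_AB$ gives a splitting $Q\ot_AB\cong B\op Q_0$ and hence automorphisms $\diag(u,1,\dots,1)$ of arbitrary unit determinant $u\in B^*$, I may adjust $\ga$ and $\gb$ to arrange $\det\gs=1$, i.e.\ $\gs\in\SL_r(B_{f(s)})$. The hypothesis $\SL_r(B_{f(s)})=\E_r(B_{f(s)})$ then expresses $\gs$ as a product of elementary matrices. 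Finally, the same splitting $Q\ot_AB\cong B\op Q_0$ makes the elementary automorphisms of $Q\ot_AB$ realise, after localisation, the first-row and first-column elementary matrices of $B_{f(s)}$ with $B$-integral entries, while the diagonal automorphisms $\diag(s^{n_1},\dots,s^{n_r})$ coming from $\GL_r(A_s)$ supply the powers of $f(s)$ needed to clear denominators; organised correctly, these absorb $\gs$ into the two images and yield $P\cong Q$.

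The heart of the matter, and the step I expect to be the main obstacle, is this last absorption. Because the square is not a Milnor square (neither $f_s\colon A_s\ra B_{f(s)}$ nor the localisation $B\ra B_{f(s)}$ need be surjective), an elementary matrix over $B_{f(s)}$ cannot simply be lifted to one side, and one must carefully arrange the denominator-clearing so that the whole of $\gs$ is realised as a single automorphism of $Q\ot_AB$ followed by a single automorphism of $P_s$, rather than merely as a word in the subgroup they generate. It is precisely here that the unimodular element (guaranteeing enough elementary automorphisms on the $B$-side) and the identity $\SL_r(B_{f(s)})=\E_r(B_{f(s)})$ (controlling the discrepancy between $\GL_r$ and $\E_r$) are indispensable, and this is where the patching technique does its real work.
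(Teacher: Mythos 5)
Your setup is sound and matches how one must begin (note the paper itself gives no proof of this proposition: it is quoted from Roy \cite{Roy82}, Proposition 3.4, whose patching machinery is the intended argument). Since $s$ and $f(s)$ are non-zerodivisors, the cartesian square gives an exact sequence $0\ra A\ra A_s\op B\ra B_{f(s)}$, which stays exact after tensoring with the flat module $\Hom_A(P,Q)$, so homomorphisms and hence compatible pairs of isomorphisms do patch; the discrepancy is a matrix $\gs\in \GL_r(B_{f(s)})$, well defined up to left multiplication by the image of $\Aut_B(Q\ot_A B)$ and right multiplication by the image of $\GL_r(A_s)$. Your determinant normalization is also correct: here the subgroup generated by the images of $A_s^*$ and $B^*$ coincides with the product set because $B_{f(s)}^*$ is abelian (indeed a direct computation with $\wedge^r$ of the two patching data gives $\det \gs=u^{-1}v^{-1}$ with $u\in A_s^*$, $v\in B^*$), and the splitting $Q\ot_AB\cong B\op Q_0$ from the unimodular element realizes the needed $B^*$-determinant automorphisms. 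So you correctly reduce to $\gs\in\SL_r(B_{f(s)})=\E_r(B_{f(s)})$.

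The final ``absorption'' step, however, is a genuine gap --- and you concede as much --- yet it is precisely the mathematical content of the proposition; everything before it is routine. Two concrete obstructions stand in the way of your sketch. First, the product-set versus generated-subgroup problem you name: $E_{ij}(b/f(s)^n)=D^{-1}E_{ij}(b)\,D$ with $D$ a diagonal matrix from $\GL_r(A_s)$ places each generator only in the subgroup generated by the two images, and since neither image normalizes the other, a word of such generators cannot be telescoped into a single product $\gd\cdot \tau$ by any naive denominator-clearing: a word typically contains both $E_{ij}$ and $E_{ji}$, which demand incompatible signs of exponents in a single conjugating diagonal. The known repairs are real lemmas (a Suslin-type dilation lemma: for $\gs(T)\in \E_r(B_{f(s)}[T])$ with $\gs(0)=id$ there is $N$ with $\gs(f(s)^NT)$ in the image of $\E_r(B[T])$, combined with a patching-with-parameter argument), none of which you state or prove. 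Second, a frame mismatch your sketch overlooks: $\gs$ is elementary with respect to the basis coming from $Q_s$, whereas your supply of $B$-side automorphisms consists of transvections of the splitting $B\op Q_0$. In the $Q_s$-frame the image of $\Aut_B(Q\ot_AB)$ is \emph{not} the set of matrices with entries in $B$ (the lattice $Q\ot_AB$ differs from the $B$-span of the $Q_s$-basis), $Q_0$ need not be free, and $(Q_0)_{f(s)}$ is only stably free --- $\SL_r=\E_r$ gives no transitivity on $\Um_r(B_{f(s)})$ and hence no freeness --- so the ``first-row and first-column elementary matrices with $B$-integral entries'' you invoke are not available as stated. Roy's cited proof exists exactly to supply this decomposition; as written, your last paragraph is a plan for the hard step, not a proof of it.
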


\begin{define}\label{def}
(see \cite{G2}, Section 6) Let $R$ be a ring and $M$ a
  $\Phi$-simplicial monoid of rank $r$. Fix an integral extension
  $M\hookrightarrow \BZ^r_+$.  Let $\{t_1,\ldots,t_r\}$ be a free
  basis of $\BZ^r_+$. Then $M$ can be thought of as a monoid
  consisting of monomials in $t_1,\ldots,t_r$. 

For $x=t_1^{a_1}\ldots t_r^{a_r}$ and $y=t_1^{b_1}\ldots t_r^{b_r}$ in
$\BZ^r_+$, define $x$ is {\it lower} than $y$ if $a_i < b_i$ for
some $i$ and $a_j=b_j$ for $j > i$. In particular, $t_i$ is lower than
$t_j$ if and only if $i<j$. 

For $f\in R[M]$, define the {\it
  highest member} $H(f)$ of $f$ as $am$, where
$f=am+a_1m_1+\ldots+a_km_k$ with $m,m_i\in M$, $a\in R\setminus \{0\},
a_i\in R$ and each $m_i$ is strictly lower than $m$ for $1\leq i\leq
k$.

 An element $f\in R[\BZ^{r}_+]$ is
  called {\it monic} if $H(f)=at^s_r$, where $a\in R$ is a unit and $s>0$.
  An element $f\in R[M]$ is said to be {\it monic} if $f$ is monic in
  $R[\BZ^r_+]$ via the embedding $R[M]\hookrightarrow R[\BZ^r_+]$. 

Define $M_0$ to be the submonoid  $\{ t_1^{s_1}\ldots
  t_{r-1}^{s_{r-1}}|\, s_i\geq 0\}\cap M$ of  $M$.  Clearly $M_0$ is finitely
  generated as $M$ is finitely generated. Also
  $M_0\hookrightarrow \BZ^{r-1}_+$ is integral. Hence
  $M_0$ is $\Phi$-simplicial. Further, if $M$ is
  seminormal, then $M_0$ is seminormal.

 Grade $R[M]$ as $R[M]=R[M_0]\oplus A_1\oplus A_2\oplus \ldots$, where
 $A_i$ is the $R[M_0]$-module generated by the monomials
 $t_1^{s_1}\ldots t_{r-1}^{s_{r-1}}t_r^i \in M$.  For an ideal $I$ in
 $R[M]$, define its leading coefficient ideal $\gl(I)$ as $\{a\in R
 \,|\,\exists f\in I$ with $H(f)=am$ for some $m\in M\}$.  $\hfill
 \gj$
\end{define}

\begin{lemma}\label{3l2}
(\cite{G2}, Lemma 6.5)
 Let $R$ be a ring and $M\subset \BZ_+^r$ a $\Phi$-simplicial
 monoid. If $I\subseteq R[M]$ is an ideal, then $\hh(\gl(I))\geq
 \hh (I)$, where $\gl(I)$ is defined in $(\ref{def})$.
\end{lemma}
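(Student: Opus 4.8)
The plan is to recognise $\gl(I)$ as the ideal of leading coefficients of the initial ideal of $I$, and to deduce the height bound from a term-order degeneration. Write $S=R[M]$. The relation ``$x$ lower than $y$'' of $(\ref{def})$ is the lexicographic term order on $M\subset\BZ_+^r$ in which $t_r$ is most significant; it is a well-order, and $H(f)$ is precisely the initial term of $f$ for this order. Let $\mathrm{in}(I)\subseteq S$ be the initial ideal, generated by all leading terms $H(f)$ with $f\in I$. By the very definition of $\gl(I)$, each such $H(f)$ has the form $a_f m_f$ with $a_f\in\gl(I)$ and $m_f\in M$; hence $a_f m_f\in\gl(I)\,S$ and therefore $\mathrm{in}(I)\subseteq\gl(I)\,S$. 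This single containment is the point at which the leading-coefficient ideal enters.

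Granting two height identities, the lemma then follows from a short chain. $(a)$ \emph{Base change along $R\to S$.} Since $S=R[M]$ is $R$-free, $R\to S$ is faithfully flat; as $M$ is cancellative and torsion-free, $k(\p)[M]$ is a domain for every $\p\in\Spec R$, so the fibre of $\Spec S\to\Spec R$ over $\p$ is irreducible. Combined with the dimension formula for flat maps, $\hh(\mathfrak{Q})=\hh(\mathfrak{Q}\cap R)+\hh\big(\mathfrak{Q}\,k(\mathfrak{Q}\cap R)[M]\big)$, this gives $\hh(J S)=\hh(J)$ for every ideal $J\subseteq R$, in particular $\hh(\gl(I)\,S)=\hh(\gl(I))$. $(b)$ \emph{Degeneration.} $\hh(\mathrm{in}(I))=\hh(I)$; in fact only $\hh(I)\le\hh(\mathrm{in}(I))$ is needed. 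Assuming these, the containment above yields
\[
\hh(I)=\hh(\mathrm{in}(I))\le\hh(\gl(I)\,S)=\hh(\gl(I)),
\]
which is the assertion.

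The real content is $(b)$. Over a field $k$ this is the classical invariance of codimension under passage to an initial ideal: choose a weight $w\in\BZ^r$ inducing $H(\cdot)$ on a finite standard generating set of $I$, form the flat one-parameter family over $\BA^1$ whose general fibre is $k[M]/I$ and whose special fibre is $k[M]/\mathrm{in}_w(I)=k[M]/\mathrm{in}(I)$, and note that $k[M]$ is an affine domain, hence equidimensional and catenary, so equality of fibre dimensions upgrades to equality of heights; the one inequality $\hh(I)\le\hh(\mathrm{in}(I))$ already follows from upper semicontinuity of fibre dimension and needs no catenary hypothesis. For a general Noetherian base $R$ I would reduce to this fibrewise statement: the flat dimension formula of $(a)$ expresses both $\hh(I)$ and $\hh(\mathrm{in}(I))$ as the minimum over $\p\in\Spec R$ of $\hh(\p)$ plus a height computed inside the fibre $k(\p)[M]$, and $\mathrm{in}(-)$ is compatible with the base change $S\to k(\p)[M]$, so the field case applies in each fibre.

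I expect the main obstacle to be exactly this passage to an arbitrary (possibly non-catenary, non-equidimensional) base $R$: one must verify that the term-order degeneration can be set up flatly over $R$ and that forming the initial ideal genuinely commutes with reduction modulo each prime of $R$, so that the fibrewise field computations reassemble correctly through the flat dimension formula. By contrast, the containment $\mathrm{in}(I)\subseteq\gl(I)\,S$ and the base-change identity $(a)$ are routine; all of the difficulty is concentrated in the codimension-invariance $(b)$.
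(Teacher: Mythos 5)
The paper does not actually prove this lemma: it quotes it verbatim from Gubeladze (\cite{G2}, Lemma 6.5), so your argument has to stand entirely on its own. Two of your three steps do stand: the containment $\mathrm{in}(I)\subseteq\gl(I)\,R[M]$ is immediate from the definition of $\gl(I)$, and the base-change identity $(a)$, $\hh(J\,R[M])=\hh(J)$ for ideals $J\subseteq R$, is correct (faithful flatness, $\p R[M]$ prime since $(R/\p)[M]$ is a domain, and the local flat dimension formula). But, as you yourself suspect, the entire weight of the lemma rests on $(b)$, which you establish only over a field, and your proposed reduction of $(b)$ to the field case is not merely unverified --- it is false. Forming the initial ideal does not commute with the base changes $R[M]\to k(\p)[M]$: one only has the inclusion $\mathrm{in}(I)\,k(\p)[M]\subseteq \mathrm{in}\bigl(I\,k(\p)[M]\bigr)$, and this inclusion gives $\hh\bigl(\mathrm{in}(I)k(\p)[M]\bigr)\leq \hh\bigl(I\,k(\p)[M]\bigr)$, the opposite of the fibrewise inequality your reassembly needs.

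A concrete counterexample: take $R=\BZ$, $M=\BZ_+^2$, $I=(t_1+2t_2)$. In the paper's order $t_2$ is higher than $t_1$, so for $g$ with highest member $am$ the product $g\cdot(t_1+2t_2)$ has highest member $2am\,t_2$; hence $\mathrm{in}(I)=(2t_2)$ and $\gl(I)=(2)$. At $\p=(2)$ the fibre of $\mathrm{in}(I)$ is the zero ideal of $\BF_2[t_1,t_2]$ (height $0$), while $I\,\BF_2[t_1,t_2]=(t_1)$ has height $1$: the fibrewise inequality $\hh\bigl(\mathrm{in}(I)k(\p)[M]\bigr)\geq \hh\bigl(I\,k(\p)[M]\bigr)$ fails at this prime. (Globally $\hh(\mathrm{in}(I))=\hh(I)=1$, but the two minima in your flat dimension formula are attained at different primes, $(2)$ versus $(0)$, so the field-case computations cannot be ``reassembled'' fibre by fibre.) Thus $(b)$ over an arbitrary, possibly non-catenary, Noetherian base would need a different mechanism --- say a Rees-algebra degeneration performed directly over $R$ with a careful height comparison, or the direct elementary argument Gubeladze gives in \cite{G2}. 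A secondary unpaid debt: even your field case invokes finite Gr\"obner bases, realization of the lex-type order by a weight, and flatness of the one-parameter family in the affine semigroup ring $k[M]$ rather than a polynomial ring; this is available in the literature but is nontrivial and should be cited, whereas the base-change step is the genuine gap.
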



\section{Main Theorem}

This section contains main results stated in the introduction. We also
give some examples of monoids in $C(\Phi)$.

\subsection{Over $C(\Phi)$ class of monoids}

\begin{lemma}\label{3l7}
 Let $R$ be a ring and $M\subset \BZ_+^r$ a monoid in $\CC(\Phi)$ of rank $r$.
  Let $f\in R[M]\subset R[\BZ_+^r]=
 R[t_1,\ldots,t_r]$ with $H(f)=ut_1^{s_1}\ldots t_r^{s_r}$ for some
 unit $u\in R$. Then there exist $\eta \in Aut_R(R[M])$
 such that $\eta(f)$ is a monic polynomial in $t_r$.
\end{lemma}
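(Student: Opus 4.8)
The plan is to build $\eta$ from the elementary automorphisms furnished by the $\CC(\Phi)$-hypothesis, namely the shears $\sigma(t_i)=t_i+t_r^{c_i}$ for $i<r$ (with $t_r$ fixed), and to read off the effect on the highest member. First I would record that under such a shear a monomial $t_1^{a_1}\cdots t_r^{a_r}$ acquires, as a polynomial in $t_r$, the leading term $t_r^{W}$ with $W=\sum_{i<r}c_ia_i+a_r$ and coefficient equal to the original coefficient. Choosing the $c_i>0$ so that the integers $W$ are pairwise distinct on the finite support of $f$, the image $\sigma(f)$ becomes a polynomial in $t_r$ whose leading coefficient is the single constant $a\in R$ attached to the weight-maximal monomial $m^{*}$; hence $H(\sigma(f))=a\,t_r^{W(m^{*})}$ and $\sigma(f)$ is monic precisely when $a$ is a unit. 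This reduces the whole lemma to arranging that the weight-maximal monomial has a unit coefficient, for which the obvious candidate is $H(f)=u\,t_1^{s_1}\cdots t_r^{s_r}$ itself.

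To force $m^{*}=H(f)$ I would argue by induction on the rank $r$, using the grading $R[M]=\bigoplus_{i\geq 0}A_i$ by the $t_r$-degree, where $A_0=R[M_0]$ and $M_0=M_{r-1}\in\CC(\Phi)$ has rank $r-1$. Writing $n=s_r=\deg_{t_r}f$ and viewing $f\in R[t_1,\dots,t_{r-1}][t_r]$, its leading coefficient in $t_r$ has highest member $u\,t_1^{s_1}\cdots t_{r-1}^{s_{r-1}}$, again with the unit $u$. The inductive hypothesis (applied at level $r-1$, where the $\CC(\Phi)$-shears pivoting on $t_{r-1}$ are available and restrict to $R[M_0]$) produces an automorphism $\theta$, fixing $t_r$, that makes this leading coefficient monic in $t_{r-1}$; after applying $\theta$ the highest member of $f$ loses its dependence on $t_1,\dots,t_{r-2}$, so that one further shear pivoting on $t_r$ can collapse the surviving $t_{r-1}$ into a pure power of $t_r$ with coefficient $u$. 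Taking $\eta$ to be the resulting composite, which lies in $\Aut_R(R[M])$ because each factor is one of the guaranteed $\CC(\Phi)$-automorphisms, would finish the proof.

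The hard part is the coefficient-unit step, and it is exactly here that the order conventions bite: the ordering makes $t_r$ the most significant variable, whereas a shear pivoting on $t_r$ assigns $t_r$ the \emph{smallest} weight, so no single shear can by itself turn the lex-leading $H(f)$ into the weight-dominant monomial --- a monomial of small $t_r$-degree but large lower degree will overtake it. Neutralizing such monomials is the crux, and it is precisely for this that the closure property defining $\CC(\Phi)$ (that the pivoting shears restrict to $R[M_m]$ at every level $m$) is needed; the delicate bookkeeping is to check that, after the inductive normalization of the leading $t_r$-coefficient, the monomial that finally survives as the highest member still carries the unit $u$ rather than some other coefficient of $f$.
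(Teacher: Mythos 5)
Your proposal does not close the lemma, and you effectively concede this: the ``delicate bookkeeping'' deferred in your last paragraph is not a residual check but the entire content of the statement. Moreover, the inductive mechanism of your second paragraph fails at three concrete points. First, the leading $t_r$-coefficient of $f$ need not lie in $R[M_0]$: a monomial $t_1^{a_1}\cdots t_{r-1}^{a_{r-1}}t_r^{s_r}$ can belong to $M$ while $t_1^{a_1}\cdots t_{r-1}^{a_{r-1}}$ does not belong to $M_0=M\cap\{t_1^{s_1}\cdots t_{r-1}^{s_{r-1}}\}$, so the rank-$(r-1)$ inductive hypothesis (a statement about $R[M_{r-1}]$) does not apply to that coefficient. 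Second, and fatally, the shears pivoting on $t_{r-1}$ furnished by Definition \ref{3d11} are only guaranteed to restrict to automorphisms of $R[M_{r-1}]$; nothing in the definition of $\CC(\Phi)$ extends them to $R[M]$ while fixing $t_r$ (e.g.\ for the monoid $\langle t_1^2,t_2^2,t_3^2,t_1t_3,t_2t_3\rangle$ of Example \ref{3e2}, $t_1\mapsto t_1+t_2^c$ sends $t_1^2$ to $t_1^2+2t_1t_2^c+t_2^{2c}$ and $t_1t_2^c\notin M$ for any $c$), so your $\theta$ need not exist in $\Aut_R(R[M])$. Third, even granting $\theta$, your closing step --- ``one further shear pivoting on $t_r$ can collapse the surviving $t_{r-1}$'' --- runs into exactly the obstruction you yourself isolated: under the weight $W=\sum_{i<r}c_ia_i+a_r$ a monomial with small $t_{r-1}$- and $t_r$-exponents but large $t_1,\dots,t_{r-2}$-exponents can still be weight-maximal, and its coefficient need not be a unit, so normalizing the highest member to $u\,t_{r-1}^{s}t_r^{s_r}$ does not make it weight-dominant.

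For comparison: the paper's own proof is a single sentence performing precisely the one-shear move you argue cannot suffice --- it chooses $c_1,\dots,c_{r-1}$ so that $\eta(t_i)=t_i+t_r^{c_i}$ restricts to $R[M]$ ``and such that $\eta(f)$ is monic,'' with no analysis of which coefficient ends up in front. Your first paragraph is a correct sharpening of what that assertion requires ($H(\eta(f))=a\,t_r^{W_{\max}}$, where $a$ is the coefficient --- or sum of tied coefficients --- attached to the weight-maximal support monomial), and your objection that lex-maximality of $H(f)$ (with $t_r$ most significant) does not imply weight-maximality (where $t_r$ carries the smallest weight) is borne out by examples: for $M=\BZ_+^2\in\CC(\Phi)$, $R=\BZ$ and $f=t_1t_2+2t_1^2$, one has $H(f)=t_1t_2$ with unit coefficient, yet every shear $t_1\mapsto t_1+t_2^{c_1}$ gives $H(\eta(f))=2t_2^{2c_1}$ for $c_1\geq 2$ and $3t_2^2$ for $c_1=1$, never monic (an $\eta$ making this $f$ monic exists for $\BZ[t_1,t_2]$, but only as a composite not supplied by Definition \ref{3d11}). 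So your diagnosis, where correct, undercuts the paper's one-line argument rather than reproducing it; but since your proposed repair fails for the reasons above, the lemma remains unproven by your attempt.
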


\begin{proof}
Since $M\in C(\Phi)$, we can 
 choose positive integers $c_1,\ldots,c_{r-1}$ such that the automorphism
$\eta\in
 Aut_{R[t_r]}R[t_1\ldots,t_r]$ defined by $\eta(t_i)=t_i+t_r^{c_i}$ for
 $i=1,\ldots,r-1$, restricts to an automorphism of $R[M]$ and such that
$\eta(f)$ is a monic polynomial in $t_r$. $\hfill \gj$
\end{proof}

\begin{lemma}\label{3l4}
Let $R$ be a ring of dimension $d$ and $M\subset \BZ_+^r$ a monoid in $\CC(\Phi)$ of rank $r$.
Let $P$ be a projective $R[M]$-module of rank $>d$. Write
$R[M]=R[M_0]\oplus A_1\oplus A_2\ldots$, as defined in
$(\ref{def})$ and $A_+=A_1\oplus A_2\oplus \ldots$ an ideal of
$R[M]$.  Assume that $P_s$ is free for some $s\in R$ and $P/sA_+P$ has
a unimodular element. Then the natural map $\Um(P)\ra \Um(P/sA_+P)$ is
surjective. In particular, $P$ has a unimodular element.
\end{lemma}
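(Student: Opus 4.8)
The plan is to verify the two hypotheses of Proposition \ref{3p4} for a carefully chosen representative of the given class, and then let that proposition supply the lift. Fix $\bar p\in\Um(P/sA_+P)$ and choose any lift $p\in P$, so that $O_P(p)+sA_+=R[M]$. I first record the prime-avoidance consequence that drives everything: if $\mq\in\Spec R[M]$ contains $O_P(p)$, then $sA_+\not\subseteq\mq$, hence $s\notin\mq$ \emph{and} $A_+\not\subseteq\mq$. Thus every prime over $O_P(p)$ survives in $R[M]_s$, and at each such prime $A_+R[M]_{\mq}=R[M]_{\mq}$, so $(A_+P)_{\mq}=P_{\mq}$. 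Because I must land on the \emph{given} $\bar p$, every modification of $p$ has to stay inside the coset $p+sA_+P$; along that coset the condition $O_P(p)+sA_+=R[M]$ persists automatically, so the only nontrivial task is to arrange that $O_P(p)$ contain a polynomial that is monic in $t_r$, which is what forces the integral-extension hypothesis of \ref{3p4}.

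The decisive step is to raise $\hh O_P(p)$ above $d$ while perturbing $p$ only by an element of $sA_+P$. Since $P_s$ is free, I can run the basic-element (general-position) argument underlying Lindel's Proposition \ref{3p1} over $R[M]_s$; the one point to secure is that the corrections may be confined to $A_+P$ instead of the full $s$-dual submodule. This is exactly what the observation above delivers: the height of $O_P(p)$ is tested only at primes $\mq\supseteq O_P(p)$, and at each such $\mq$ one has $(A_+P)_{\mq}=P_{\mq}$, i.e. $A_+P$ gives full local freedom precisely where it is needed. Carrying out the general-position choices with increments in $sA_+P$ then produces $p_1\in p+sA_+P$ with $\hh O_P(p_1)>d$ (computed in $R[M]$; as every prime over $O_P(p_1)$ still avoids $s$, this equals the height in $R[M]_s$, and $\dim R[M]=d+r>d$ makes the bound attainable since $r\ge1$). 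I expect this restricted general-position step to be the main obstacle, and it is where the full strength of $\bar p\in\Um(P/sA_+P)$---rather than mere unimodularity modulo $sP$---is genuinely used.

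With $\hh O_P(p_1)>d$, Lemma \ref{3l2} gives $\hh\gl(O_P(p_1))\ge\hh O_P(p_1)>d=\dim R$; since $\gl(O_P(p_1))\subseteq R$ and a proper ideal of $R$ cannot exceed height $d$, we get $\gl(O_P(p_1))=R$. Hence some $h\in O_P(p_1)$ has $H(h)=um$ with $u\in R$ a unit, and Lemma \ref{3l7} furnishes $\eta\in\Aut_R(R[M])$ with $\eta(h)$ monic in $t_r$. The automorphisms coming from $\CC(\Phi)$ fix $R$ pointwise, so $\eta(s)=s$, and they satisfy $\eta(A_+)=A_+$: each monomial generator $t_1^{a_1}\cdots t_{r-1}^{a_{r-1}}t_r^{a_r}$ of $A_+$ has $a_r\ge1$, and $\eta$ only introduces further powers of $t_r$, so both $\eta$ and $\eta^{-1}$ preserve the ideal $A_+$. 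Therefore passing from $(P,p_1)$ to $(\eta^{*}P,\eta p_1)$ keeps $O(\,\cdot\,)+sA_+=R[M]$, while now $O_{\eta^{*}P}(\eta p_1)$ contains the monic $\eta(h)$; as $M\hookrightarrow\BZ_+^r$ is integral, a monic relation in $t_r$ expresses a high power of $t_r$ through lower-degree elements and makes $R[M]/O_{\eta^{*}P}(\eta p_1)$ integral over $R[M_0]/\bigl(R[M_0]\cap O_{\eta^{*}P}(\eta p_1)\bigr)$.

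Both hypotheses of Proposition \ref{3p4} now hold for $\eta p_1$ in the graded ring $R[M]=R[M_0]\op A_1\op\cdots$ with base $A_0=R[M_0]$ and $s\in R\subseteq R[M_0]$. Applying \ref{3p4} yields $p''\in\Um(\eta^{*}P)$ with $p''-\eta p_1\in sA_+(\eta^{*}P)$; pulling back along $\eta^{-1}$ (legitimate since $\eta^{-1}$ fixes $s$ and preserves $A_+$) gives $\eta^{-1}(p'')\in\Um(P)$ with $\eta^{-1}(p'')-p_1\in sA_+P$. Combining with $p_1-p\in sA_+P$, the element $\eta^{-1}(p'')$ reduces to $\bar p$ modulo $sA_+P$. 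As $\bar p$ was arbitrary, the map $\Um(P)\ra\Um(P/sA_+P)$ is surjective, and choosing any $\bar p$ gives the final assertion that $P$ has a unimodular element.
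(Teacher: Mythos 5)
Your overall architecture coincides with the paper's: raise $\hh\, O_P(p)$ above $d$ while moving only inside the coset $p+sA_+P$, then use Lemma \ref{3l2} to force $\gl(O_P(p))=R$, Lemma \ref{3l7} to arrange a monic in $t_r$ (your verification that the $\CC(\Phi)$-automorphism fixes $s$ and satisfies $\eta(A_+)=A_+$ is correct, and the paper leaves it implicit), and finish with Proposition \ref{3p4}. But the step you yourself flag as ``the main obstacle'' --- producing $p_1\in p+sA_+P$ with $\hh\, O_P(p_1)>d$ --- is asserted, not proved, and your justification does not close it. Knowing that $(sA_+P)_{\mq}=P_{\mq}$ at every prime $\mq\supseteq O_P(p)$ is a pointwise localization statement; Proposition \ref{3p1} draws its correction $q$ from the $s$-dual submodule $F$, which has no reason to lie in (or even meet) $A_+P$, so ``carrying out the general-position choices with increments in $sA_+P$'' amounts to re-proving Lindel's lemma with a restricted increment module. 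That would be a substantive Eisenbud--Evans/Noetherian-induction argument in its own right, not a corollary of the localization remark --- and it is exactly the content of the lemma, since everything after it is bookkeeping.

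The paper closes this gap with a one-line trick you missed: since $O_P(p)+sA_+=A$, there exists $g\in A_+$ with $1+sg\in O_P(p)$; then $\bar p\in \Um(P/sgP)$ and $P_{sg}$ is free, so applying Proposition \ref{3p1}(ii) \emph{with $sg$ in place of $s$} produces $q\in F$ with $\hh\, O_P(p+sgq)>d$, and the correction $sgq$ lies in $sA_+P$ automatically because $g\in A_+$. No modified general-position argument is needed; the standard lemma applied to the right element does all the work. (The paper also disposes of nilpotent $s$ at the outset, though in your framing that case is vacuous: if $s$ is nilpotent, every prime contains $sA_+$, so $O_P(p)+sA_+=A$ already forces $O_P(p)=A$.) With this substitution, the remainder of your argument goes through and reproduces the paper's proof.
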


\begin{proof}
Write $A=R[M]$. Since every unimodular element of $P/sA_+P$ can be
lifted to a unimodular element of $P_{1+sA_+}$, if $s$ is nilpotent,
then elements of $1+sA_+$ are units in $A$ and we are done.
Therefore, assume that $s$ is not nilpotent.

Let $p\in P$ be such that $\ol p\in \Um(P/sA_+P)$.  Then
$O_P(p)+sA_+=A$. Hence $O_P(p)$ contains an element of
$1+sA_+$. Choose $g\in A_+$ such that $1+sg\in O_P(p)$. Applying
(\ref{3p1}) with $sg$ in place of $s$, we get $q\in F\subset P$ such
that $\hh (O_P(p+sgq)) > d$.  Since $p+sgq$ is a lift of $\ol p$,
replacing $p$ by $p+sgq$, we may assume that $\hh (O_P(p))>d$. By
(\ref{3l2}), we get $\hh(\gl(O_P(p)))\geq \hh(O_P(p)) >
d$. Since $\gl(O_P(p))$ is an ideal of $R$, we get $1\in \gl(O_P(p))$.
Hence there exists $f\in O_P(p)$ such that the coefficient of
$H(f)$ (highest member of $f$) is a unit.
  
Suppose $H(f)=ut_1^{s_1}\ldots t_r^{s_r}$ with $u$ a unit in
$R$. Since $M\in\CC(\Phi)$, by (\ref{3l7}), there exists $\ga \in
\Aut_R(R[M])$ such that $\ga(f)$ is monic in $t_r$. Thus we may assume
that $O_P(p)$ contains a monic polynomial in $t_r$. Hence $A/O_P(p)$
is an integral extension of $R[M_0]/(O_P(p)\cap R[M_0])$ and $\ol p\in
\Um(P/sA_+P)$. By (\ref{3p4}), there exists $p'\in \Um(P)$ such that
$p'-p \in sA_+P$. This means $p'\in \Um(P)$ is a lift of $\ol p$. This
proves the result.  $\hfill \gj$
\end{proof}

\begin{remark}
In (\ref{3l4}), we do not need the monoid $M$ to be seminormal.
\end{remark}

The next result proves $(\ref{3t1}(1))$.

\begin{theorem}\label{3t5}
 Let $R$ be a ring of dimension $d$ and $M$ a monoid in $\CC(\Phi)$ of
 rank $r$. If $P$ is a projective $R[M]$-module of rank $r'\geq d+1$,
 then $P$ has a unimodular element. In other words, {\it Serre dim}
 $R[M]\leq d$.
\end{theorem}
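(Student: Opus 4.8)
The plan is to apply Lemma \ref{3l4}, so the whole task reduces to producing, for a suitable $s\in R$, an $s$ with $P_s$ free together with a unimodular element of $P/sA_+P$. I would argue by a double induction on the pair $(\dim R,\ \mathrm{rank}\,M)$. First I would reduce to the case that $R$ is reduced: if $N=\mathrm{nil}(R)$, then $NR[M]$ is a nilpotent ideal of $R[M]$, so the elements of $1+NR[M]$ are units and $\Um(P)\ra \Um(P\ot_{R[M]}(R/N)[M])$ is surjective, while $\dim(R/N)=\dim R$; hence it suffices to treat $R_{\mathrm{red}}[M]$. The base cases are then clean. When $\mathrm{rank}\,M=0$ we have $R[M]=R$ and Serre's theorem gives a unimodular element in any projective of rank $\ge d+1$. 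When $\dim R=0$ and $R$ is reduced, $R$ is a finite product of fields, so $R[M]=\prod_i K_i[M]$ and, since $M\in\CC(\Phi)$ is seminormal, Theorem \ref{G1} makes $P$ free, whence it has a unimodular element.

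For the inductive step (with $d,r\ge 1$) the key preliminary is to find a \emph{nonzerodivisor} $s\in R$ with $P_s$ free. Here seminormality is used a second time: the total quotient ring of the reduced ring $R$ is a finite product of fields $T(R)=\prod_i K_i$, so $P\ot_{R[M]}T(R)[M]$ is projective over $\prod_i K_i[M]$ and is free of rank $r'$ by Theorem \ref{G1}. Clearing the finitely many denominators of a trivialization produces a nonzerodivisor $s\in R$ with $P_s$ free over $R_s[M]$. Because $R$ is reduced, $s$ lies in no minimal prime, so $\dim R/sR\le d-1$; this dimension drop is what lets the induction on $d$ engage.

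It remains to exhibit a unimodular element of $P/sA_+P$. Using the grading $R[M]=R[M_0]\op A_1\op A_2\op\cdots$ of $(\ref{def})$ with $A_+=A_1\op A_2\op\cdots$, one checks that $R[M]/sA_+$ is the Milnor fibre product $R[M_0]\times_{(R/sR)[M_0]}(R/sR)[M]$, the map $R[M_0]\surj (R/sR)[M_0]$ being onto. Now $P/A_+P$ is projective over $R[M_0]$, and since $M_0\in\CC(\Phi)$ has rank $r-1$ while $\mathrm{rank}(P/A_+P)=r'\ge d+1$, the induction on rank yields a unimodular element there; similarly $P/sP$ is projective over $(R/sR)[M]$ of rank $r'\ge d+1>\dim(R/sR)+1$, so the induction on $\dim$ yields a unimodular element there. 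Gluing these two unimodular elements across the common quotient $(R/sR)[M_0]$ gives a unimodular element of $P/sA_+P$, and Lemma \ref{3l4} lifts it to a unimodular element of $P$, proving {\it Serre dim}\,$R[M]\le d$.

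I expect the gluing to be the main obstacle. The two unimodular elements need only agree over $(R/sR)[M_0]$ up to an automorphism realizable on one factor, and since $R[M_0]\surj (R/sR)[M_0]$ one is free to adjust the $R[M_0]$-side by any elementary automorphism that lifts. The real point is therefore a transitivity statement: that over $(R/sR)[M_0]$ the relevant elementary automorphism group acts transitively enough on unimodular elements of the reduced module, equivalently that unimodular elements lift along $R[M_0]\surj(R/sR)[M_0]$. This is exactly where the hypothesis $r'\ge d+1$ must be spent, in tandem with the two inductive hypotheses; upgrading the mere existence statements above to this transitivity is the part that requires care, and it is plausibly where Propositions \ref{3p3}--\ref{3p4}, or a companion transitivity result, re-enter. $\hfill \gj$
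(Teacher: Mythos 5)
Your skeleton coincides with the paper's proof of (\ref{3t5}): the same reduction to a reduced ring, the same double induction on $(d,r)$, the same base cases via Serre and (\ref{G1}), the same choice of a nonzerodivisor $s$ with $P_s$ free by inverting all nonzerodivisors, the same fiber square $R[M]/sA_+\cong R[M_0]\times_{(R/sR)[M_0]}(R/sR)[M]$ (using $sA\cap A_+=sA_+$), and the same final appeal to (\ref{3l4}). But the gluing step, which you explicitly defer, is the actual content of the inductive step, and your proposal leaves it open. The paper settles it as follows: the image of one of the two unimodular elements splits $P/(s,A_+)P$ as $B[M_0]\oplus P_0$, where $B=R/sR$ has dimension $d-1$, and when $\mathrm{rank}\,P_0=r'-1\geq \max\{2,d\}$ the transitivity theorem of Dhorajia--Keshari (\cite{DK}, Theorem 4.5) gives $\gs\in \E(B[M_0]\oplus P_0)$ with $\gs(\ol u)=\ol v$; elementary automorphisms lift along the surjection $P/A_+P\surj P/(s,A_+)P$, so after replacing $u$ by $\gs_1(u)$ the two elements agree on the nose and patch to an element of $\Um(P/sA_+P)$. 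Your reformulation (``unimodular elements lift along $R[M_0]\surj (R/sR)[M_0]$'') is the right target but is equally unproven in your write-up, and your guess that Propositions \ref{3p3}--\ref{3p4} would supply it is off target: those results of Lindel are already consumed inside the proof of (\ref{3l4}), and the transitivity ingredient is genuinely external to this paper.

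You also miss a corner case that the bound $r'\geq d+1$ does not cover: when $d=1$ and $r'=2$, one has $\mathrm{rank}\,P_0=1$, so no elementary transitivity in the style of \cite{DK} is available and the argument above fails. The paper treats this case separately, and it is a second place where seminormality of $M\in\CC(\Phi)$ is spent: since $B$ is zero-dimensional, projective modules over $B[M]$ and $B[M_0]$ are free by (\ref{G1}); over any ring $\GL_2$ acts transitively on unimodular elements of a free rank-$2$ module (complete $(a,b)$ with $ax+by=1$ to a determinant-one matrix); and $\GL_2(B[M])\ra \GL_2(B[M_0])$ is surjective because $B[M_0]$ is a retract of $B[M]$ via the grading --- so this time the correcting automorphism is applied on the $P/sP$ side rather than the $P/A_+P$ side. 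Without a transitivity theorem such as (\cite{DK}, Theorem 4.5) and without this separate low-rank argument, your induction does not close, so as it stands the proposal is an accurate outline of the paper's strategy with its central step missing.
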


\begin{proof}
 We can assume that the ring is reduced with connected spectrum.  If
 $d=0$, then $R$ is a field. Since $M$ is seminormal,
 projective $R[M]$-modules are free, by
 (\ref{G1}). If $r=0$,
 then $M=0$ and we are done by Serre \cite{Serre}. 
 Assume $d,r\geq 1$ and use induction on $d$ and $r$
 simultaneously.

If $S$ is the set of all non-zerodivisor of $R$, then $\dim S^{-1}R=0$
and so $S^{-1}P$ is free $S^{-1}R[M]$-module ($d=0$ case). Choose
$s\in S$ such that $P_s$ is free.  Consider the ring
$R[M]/(sR[M])=(R/sR)[M]$. Since $\dim R/sR = d-1$, by induction on
$d$, $\Um(P/sP)$ is non-empty.

Write $R[M]=R[M_0]\oplus A_1\oplus A_2\ldots$, as defined in (\ref{def})
and $A_+=A_1\oplus A_2\oplus \ldots$ an ideal of $R[M]$.
Note that $M_0\in \CC(\Phi)$ and rank $M_0=r-1$.
Since $R[M]/A_+=R[M_0]$,
by induction on $r$,
$\Um(P/A_+P)$ is non-empty.  Write $A=R[M]$ and consider the following
fiber product diagram
 \[
\xymatrix{
A/(sA\cap A_+) \ar@{->}[r]
     \ar@{->}[d]
&A/sA 
     \ar@{->}[d]
\\
A/A_+ \ar@{->}[r]
     &A/(s,A_+)      
}
\]
If $B=R/sR$, then $A/(s,A_+)=B[M_0]$.
 Let $u\in \Um(P/A_+P)$ and $v\in \Um(P/sP)$. Let $\ol {u}$ and
 $\ol {v}$ denote the images of $u$ and $v$ in $P/(s,A_+)P$.  
Write $P/(s,A_+)P=B[M_0] \oplus P_0$, where $P_0$ is some
projective $B[M_0]$-module of rank $=r'-1$. Note that
$dim(B)=d-1$ and $\ol {u}, \ol {v}$ are two unimodular elements in
$B[M_0] \oplus P_0$. 

{\it Case 1.} Assume $rank(P_0)\geq$ max $\{2,d\}$. Then by 
(\cite{DK}, Theorem 4.5), there exists $\gs \in \E(B[M_0]\oplus P_0)$ such
that $\gs(\ol {u})=\ol {v}$. Lift $\gs$ to an element $\gs_1 \in
\E(P/A_+P)$ and write $\gs_1(u)=u_1\in \Um(P/A_+P)$. Then
images of $u_1$ and $v$ are same in $P/(s,A_+)P$. Patching $u_1$ and
$v$ over $P/(s,A_+)P$ in the above fiber product diagram, we get an element
$p \in \Um(P/(sA\cap A_+)P)$.
  
  Note $sA\cap A_+=sA_+$. We have $P_s$ is free
  and $P/sA_+P$ has a unimodular element.  Use (\ref{3l4}), to
  conclude that $P$ has a unimodular element.
   
{\it Case 2.}  Now we consider the remaining case, namely  $d=1$ and
$rank(P)=2$. Since $B=R/sR$ is $0$ dimensional, 
projective modules over $B[M_0]$ and $B[M]$ are free,
by (\ref{G1}). In particular, $P/sP$ and $P/(s,A_+)P$ are
free modules of rank $2$ over the rings $B[M]$ and $B[M_0]$
respectively.  Consider the same fiber product diagram as above.

 Since any two unimodular elements in $\Um_2(B[M_0])$ are connected
 by an element of $\GL_2(B[M_0])$.  Further $B[M_0]$ is a subring of
 $B[M]=A/sA$. Hence the natural map $\GL_2(B[M]) \ra \GL_2(B[M_0])$ is surjective. 
 Hence any automorphism of $P/(s,A_+)P$ can be lifted to
 an automorphism of $P/sP$. By same argument as above, patching
 unimodular elements of $P/sP$ and $P/A_+P$, we get a unimodular
 element in $P/(sA\cap A_+)P$.  Since $sA\cap A_+=sA_+$ and $P/sA_+P$ has
 a unimodular element, by (\ref{3l4}), $P$ has a unimodular element.
This completes the proof.
$\hfill \gj$
\end{proof}

\begin{example}\label{3r5}
 $(1)$ If $M$ is a $\Phi$-simplicial normal monoid of rank $2$, then
  $M\in\CC(\Phi)$. To see this, by (\cite{G2}, Lemma 1.3), $M\cong
  (\ga_1,\ga_2)\cap \BZ_+^2$, where $\ga_1=(a,b)$ and $\ga_2=(0,c)$
  and $(\ga_1,\ga_2)$ is the group generated by $\ga_1$ and $\ga_2$.
  It is easy to see that $M\cong ((1,a_1),(0,a_2))\cap \BZ_+^2$, where
  gcd$(b,c)=g$ and $a_1=b/g,\,a_2=c/g$. Hence $M\in \CC(\Phi)$.

$(2)$ If $M\subset \BZ^2_+$ is a finitely generated rank $2$ normal
  monoid, then it is easy to see that $M$ is $\Phi$-simplicial. Hence
  $M\in \CC(\Phi)$ by (1).

$(3)$ If $M$ is a rank $3$ normal quasi-truncated or truncated monoid
  (see \cite{G2}, Definition 5.1), then $M \in \CC(\Phi)$. To see
  this, by (\cite{G2}, Lemma 6.6), $M$ satisfies properties of $(\ref{3d11})$.
  Further, $M_0$ is a $\Phi$-simplicial normal monoid of
  rank $2$. By (1), $M_0\in\CC(\Phi)$.
$\hfill \gj$

\end{example}
 
 \begin{corollary}\label{3c11}
  Let $R$ be a ring of dimension $d$ and $M\subset \BZ_+^2$ a
  normal monoid of rank $2$. Then {\it Serre dim} $R[M]
\leq d$.
 \end{corollary}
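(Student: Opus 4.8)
The plan is to reduce the statement entirely to Theorem~\ref{3t5}, whose conclusion for a monoid in $\CC(\Phi)$ is precisely \emph{Serre dim} $R[M]\le d$. Thus the whole task is to certify that a normal rank-$2$ monoid $M\subset\BZ_+^2$ lies in the class $\CC(\Phi)$; once that is in place the corollary is immediate, and no further argument about projective modules or unimodular elements is required.

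First I would record that $M$ is finitely generated and $\Phi$-simplicial. Normality together with $M\subset\BZ_+^2$ forces $M=\mathrm{gp}(M)\cap\BR_+M$, and since $\BR_+M$ is a pointed rational cone in $\BR^2$, Gordan's lemma gives finite generation. As $M$ has rank $2$ and its cone is $2$-dimensional, that cone is simplicial, so $M$ embeds integrally into $\BZ_+^2$ and is $\Phi$-simplicial; this is the content of Example~\ref{3r5}(2).

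The substantive step — and the only place where any real work lives — is to show that a $\Phi$-simplicial normal monoid of rank $2$ actually belongs to $\CC(\Phi)$, i.e.\ that it admits the coordinate automorphisms $\eta(t_1)=t_1+t_2^{c_1}$ demanded by Definition~\ref{3d11}. For this I would put the generators of $M$ in normal form using the structure theorem for rank-$2$ normal monoids (\cite{G2}, Lemma~1.3): up to isomorphism $M\cong(\alpha_1,\alpha_2)\cap\BZ_+^2$, where $(\alpha_1,\alpha_2)$ is the subgroup of $\BZ^2$ generated by $\alpha_1=(1,a_1)$ and $\alpha_2=(0,a_2)$ for suitable positive integers $a_1,a_2$ coming from a gcd computation. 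With the generators in this explicit shape one checks that, for every sufficiently large exponent $c_1$, the map $\eta$ sends each monomial generator of $R[M]$ back into $R[M]$ and hence restricts to an $R$-automorphism of $R[M]$; this is exactly the verification carried out in Example~\ref{3r5}(1). Since $r=2$, the condition of Definition~\ref{3d11} for $m=1$ is vacuous, and for $m=2$ it is precisely the automorphism-restriction statement just checked, so $M\in\CC(\Phi)$.

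With $M\in\CC(\Phi)$ in hand, the corollary follows immediately from Theorem~\ref{3t5}. I expect the sole genuine obstacle to be the automorphism-restriction check: one must be sure that $\eta$ preserves the specific set of exponent vectors defining $M$, which is why reducing the generators to the normal form $(\alpha_1,\alpha_2)\cap\BZ_+^2$ is the essential preliminary rather than a cosmetic one.
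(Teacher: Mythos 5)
There is a genuine gap: your reduction to Theorem~\ref{3t5} silently assumes $M$ is finitely generated, and your justification of that assumption is incorrect. Normality does give $M=\mathrm{gp}(M)\cap\BR_+M$, but the cone $\BR_+M\subset\BR^2$ need \emph{not} be rational, so Gordan's lemma does not apply. Concretely, $M=\{(a,b)\in\BZ_+^2 : b\le \sqrt{2}\,a\}$ is a normal rank-$2$ submonoid of $\BZ_+^2$ (the defining condition is homogeneous, so $nx\in M$ forces $x\in M$), yet it is not finitely generated because its cone has an irrational extreme ray. The corollary as stated makes no finite-generation hypothesis, so your argument proves only a special case; moreover, $\CC(\Phi)$ consists by definition of ($\Phi$-simplicial, hence finitely generated) monoids, so a non-finitely-generated $M$ can never itself lie in $\CC(\Phi)$ and some reduction step is unavoidable.

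For the finitely generated case your route is exactly the paper's: Example~\ref{3r5}(2) shows such an $M$ is $\Phi$-simplicial, Example~\ref{3r5}(1) (via the normal form of \cite{G2}, Lemma~1.3) verifies the automorphism condition of Definition~\ref{3d11}, and Theorem~\ref{3t5} concludes. What you are missing is the paper's second paragraph: write $M=\cup_{\gl\in I}M_\gl$ as a filtered union of finitely generated submonoids; since $M$ is normal, each integral closure $\ol M_\gl$ lies inside $M$, so $M=\cup_{\gl}\ol M_\gl$ with each $\ol M_\gl$ finitely generated normal by (\cite{B-G}, Proposition 2.22). A finitely generated projective $R[M]$-module is then defined over some $R[\ol M_\gl]$, where the finitely generated case produces a unimodular element, which extends to $P$ by base change. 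Adding this direct-limit argument would close the gap.
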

 
\begin{proof}
If $M$ is finitely generated, then result follows from (\ref{3r5}(2))
and (\ref{3t5}).
 
If $M$ is not finitely generated, then write $M$ as a filtered union
of finitely generated submonoids, say $M=\cup_{\gl\in I}M_{\gl}$.
Since $M$ is normal, the integral closure $\ol M_{\gl}$ of $M_{\gl}$
is contained in $M$.  Hence $M=\cup_{\gl\in I}\ol M_{\gl}$. By
(\cite{B-G}, Proposition 2.22), $\ol M_{\gl}$ is finitely generated.
If $P$ is a projective $R[M]$-module, then $P$ is defined over $R[\ol
  M_{\gl}]$ for some $\gl \in I$ as $P$ is finitely generated. Now the
result follows from (\ref{3r5}(2)) and (\ref{3t5}).  $\hfill \gj$
 \end{proof}

The following result follows from (\ref{3r5}(3)) and  (\ref{3t5}).

\begin{corollary}\label{3c1}
Let $R$ be a ring of dimension $d$ and $M$ a truncated or normal quasi-truncated
 monoid of rank $\leq3$. Then {\it Serre dim} $R[M]\leq d$.
\end{corollary}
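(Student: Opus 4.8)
The plan is to factor the statement through membership of $M$ in the class $\CC(\Phi)$, after which Theorem \ref{3t5} delivers the bound on the Serre dimension at once. So the proof has essentially two steps: first establish that $M \in \CC(\Phi)$, and then invoke \ref{3t5}.

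For the first step, the rank $3$ case is exactly Example \ref{3r5}(3): a rank $3$ normal quasi-truncated or truncated monoid lies in $\CC(\Phi)$, the verification resting on (\cite{G2}, Lemma 6.6) to check the automorphism-restriction condition of Definition \ref{3d11}, combined with the fact that the relevant submonoid $M_0$ is a $\Phi$-simplicial normal monoid of rank $2$ and so lies in $\CC(\Phi)$ by Example \ref{3r5}(1). For ranks $1$ and $2$ I would observe that such a monoid is again $\Phi$-simplicial and normal, so that Example \ref{3r5}(1)--(2) gives $M\in \CC(\Phi)$, the rank $1$ case reducing to $R[M]=R[t]$ and Plumstead's theorem \cite{P}, or being absorbed into the normal rank $\leq 2$ analysis. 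In every case we obtain $M \in \CC(\Phi)$ of rank $\leq 3$.

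Granting $M\in \CC(\Phi)$, the second step is immediate. By Theorem \ref{3t5}, for a ring $R$ of dimension $d$ and a monoid in $\CC(\Phi)$, every projective $R[M]$-module of rank $\geq d+1$ has a unimodular element, which is precisely the assertion that {\it Serre dim} $R[M]\leq d$.

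I do not expect a genuine obstacle here, since all the substantive work is already contained in Example \ref{3r5}(3) and Theorem \ref{3t5}, both of which are available. The only point requiring a moment's care is the mild discrepancy between the rank $\leq 3$ hypothesis of the corollary and the rank $3$ formulation of Example \ref{3r5}(3): one must confirm that truncated and quasi-truncated monoids of rank $1$ or $2$ fall under the normal rank $\leq 2$ cases of Example \ref{3r5}(1)--(2), which is routine. Thus the corollary is in essence a packaging of Example \ref{3r5}(3) together with Theorem \ref{3t5}.
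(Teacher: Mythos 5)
Your proposal is correct and coincides with the paper's own route: the paper derives the corollary exactly as you do, citing Example \ref{3r5}(3) for membership of rank $3$ normal quasi-truncated or truncated monoids in $\CC(\Phi)$ and then invoking Theorem \ref{3t5}. Your additional care for the rank $1$ and $2$ cases, absorbed into the normal $\Phi$-simplicial analysis of Example \ref{3r5}(1)--(2) (with Plumstead \cite{P} available for rank $1$), is a harmless elaboration of a point the paper leaves implicit.
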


Now we prove (\ref{3t1}(2)).

\begin{proposition}\label{3t7}
 Let $R$ be a ring of dimension $d$ and $M$ a $\Phi$-simplicial
 seminormal monoid of rank $\leq 3$.  Then {\it Serre dim}
 $R[int(M)]\leq d$.
\end{proposition}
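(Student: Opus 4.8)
The plan is to reduce the statement to a single explicit monoid and then to imitate the fibration argument of Theorem~\ref{3t5}. Because $M\subset\BZ_+^r$ is $\Phi$-simplicial, the extension $M\subset\BZ_+^r$ is integral, so $\BR_+M=\BR_{\geq 0}^r$, and therefore $int(M)=int(\BR_{\geq 0}^r)\cap\BZ_+^r=(1,\dots,1)+\BZ_+^r$; in particular $int(M)$ depends only on $r$. Writing $N=int(M)\cup\{0\}$ (the identity must be adjoined so that $R[int(M)]=R[N]$ is a ring), one has the explicit description $R[N]=R+(t_1\cdots t_r)R[t_1,\dots,t_r]$, and a direct check shows $N$ is a normal, hence seminormal, monoid of rank $r$. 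I would first pass to $R$ reduced with connected spectrum; the base case $d=0$ is then a field, and all projective $R[N]$-modules are free by Gubeladze (Theorem~\ref{G1}), since $N$ is seminormal.

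Next I would dispose of the low ranks, which also anchor the induction. For $r\le 1$ we have $N=\BZ_+$ and $R[N]=R[t]$, so \emph{Serre dim}$\,R[N]\le d$ by Bhatwadekar--Roy \cite{BhR}. For $r=2$, $N\subset\BZ_+^2$ is a normal monoid of rank $2$, so \emph{Serre dim}$\,R[N]\le d$ is precisely Corollary~\ref{3c11}. The content of the proposition is thus the case $r=3$, that is $A=R[N]=R+(t_1t_2t_3)R[t_1,t_2,t_3]$.

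For $r=3$ the plan is to transcribe the proof of Theorem~\ref{3t5}. Given $P$ projective of rank $r'\ge d+1$, I choose a non-zerodivisor $s\in R$ with $P_s$ free and $\dim R/sR\le d-1$, so that $\Um(P/sP)\ne\varnothing$ by induction on $d$. Grading $A=R\oplus A_+$ by $t_3$-degree gives $A/A_+=R$, whence $P/A_+P$ is a projective $R$-module of rank $>d$ and has a unimodular element by Serre \cite{Serre}. I then form the same fiber-product square as in Theorem~\ref{3t5}, with corners $A/(sA\cap A_+)$, $A/sA$, $A/A_+=R$ and $A/(s,A_+)=R/sR$, and patch a unimodular element of $P/sP$ with one of $P/A_+P$ over the bottom corner $R/sR$ of dimension $\le d-1<r'$. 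As in Theorem~\ref{3t5} this patching uses transitivity of the elementary action (\cite{DK}, Theorem~4.5), with the borderline case $d=1$, $r'=2$ handled through the (split) surjectivity of the augmentation $\GL_2\bigl((R/sR)[N]\bigr)\to\GL_2(R/sR)$. The outcome is a unimodular element of $P/(sA\cap A_+)P=P/sA_+P$.

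The genuinely hard point, and the one I expect to be the main obstacle, is the final lifting of this unimodular element of $P/sA_+P$ to one of $P$, i.e.\ the analogue of Lemma~\ref{3l4} for $A=R[N]$. Here the $\CC(\Phi)$ machinery breaks down: every monomial of $N$ has all three exponents $\ge 1$, so no element of $R[N]$ can have highest member a unit times a power of $t_3$, and Lemma~\ref{3l7} produces no monic element; moreover a single element of $O_P(p)$ with unit leading coefficient does \emph{not} make $A/O_P(p)$ integral over $R$ (for instance, if the leading monomial is $t_1t_2t_3$ then $t_1^2t_2t_3$ already fails to be integral modulo the resulting relation), so the hypothesis of Lindel's Proposition~\ref{3p4} cannot be secured as in Lemma~\ref{3l4}. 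To repair this I would exploit the threefold symmetry of $N$ in rank $3$: after forcing $1\in\gl(O_P(p))$ by a height argument on the leading-coefficient ideal (the analogue of Lemma~\ref{3l2}), I would extract leading relations along each of the three coordinate directions so as to bound $t_1,t_2,t_3$ simultaneously modulo $O_P(p)$, thereby forcing the integrality needed to invoke the graded lifting of Lindel (Propositions~\ref{3p3}--\ref{3p4}). Arranging this integrality from the interior structure is the delicate step peculiar to rank $\le 3$; everything else is a routine adaptation of Theorem~\ref{3t5}.
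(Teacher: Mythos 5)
Your proposal contains a genuine gap, and you name it yourself: the lifting of a unimodular element of $P/sA_+P$ to $P$ (the analogue of Lemma~\ref{3l4}) is never carried out, only a hoped-for strategy is recorded (``extract leading relations along each of the three coordinate directions so as to bound $t_1,t_2,t_3$ simultaneously''). This is not a routine detail but the entire content of the rank-$3$ case, and your own observations show why it cannot be patched in place: no element of $R+(t_1t_2t_3)R[t_1,t_2,t_3]$ is monic in $t_3$, so Lemma~\ref{3l7} and Proposition~\ref{3p4} are unavailable. The situation is in fact worse than you indicate: the monoid $N=int(M)\cup\{0\}$ is \emph{not finitely generated} (every monomial $t_1^kt_2t_3$, $k\geq 1$, is irreducible in $N$), hence $R[N]$ is not even Noetherian --- the ideal generated by $\{t_1^kt_2t_3\mid k\geq 1\}$ needs infinitely many generators --- so the height estimates behind your patching scheme (Lemma~\ref{3l2} is proved for $\Phi$-simplicial, i.e.\ finitely generated, monoids, and Lindel's graded propositions are used in that setting) do not transcribe ``routinely'' as claimed. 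Separately, your opening reduction to the single monoid $(1,\ldots,1)+\BZ_+^r$ relies on the paper's literal formula $int(M)=int(\BR_+M)\cap\BZ_+^3$; but the seminormality hypothesis and the appeal to (\cite{B-G}, Proposition 2.40) only make sense for the Bruns--Gubeladze interior $M\cap int(\BR_+M)$, which depends on $gp(M)$: for the normal monoid generated by $t_1^2,t_1t_2,t_2^2$ the lattice point $(1,2)$ is interior to the cone but not in the monoid. Under that intended reading, $int(M)$ is not independent of $M$ and your ``one explicit monoid'' reduction fails at the first step.

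The paper's proof is entirely different and sidesteps every one of these obstacles: seminormality gives $int(M)=int(\ol M)$ by (\cite{B-G}, Proposition 2.40), where the normalization $\ol M$ is a finitely generated normal $\Phi$-simplicial monoid; by (\cite{G2}, Theorem 3.1), $int(\ol M)$ is a \emph{filtered union of truncated normal monoids}; since $P$ is finitely generated, it is extended from $R[N]$ for some truncated $N\subset int(M)$; and truncated monoids of rank $\leq 3$ lie in $\CC(\Phi)$ (Example~\ref{3r5}(3), via \cite{G2}, Lemma 6.6), so Corollary~\ref{3c1} --- that is, Theorem~\ref{3t5} --- produces a unimodular element over $R[N]$, which base-changes to one of $P$ over $R[int(M)]$. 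This filtered-union descent is precisely the idea missing from your proposal; it is the same device the paper uses in Corollary~\ref{3c11} (which you invoke for the rank-$2$ case without noticing that its proof already works this way), it removes any need for a new lifting lemma over the non-Noetherian ring $R[int(M)]$, and it explains where the restriction $r\leq 3$ genuinely enters: through membership of truncated monoids in $\CC(\Phi)$, not through any symmetry of the interior.
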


\begin{proof}
Recall that $int(M)=int(\BR_+M)\cap \BZ_+^3$.
Let $P$ be a projective $R[int(M)]$-module of rank $\geq d+1$.
Since $M$ is seminormal, by (\cite{B-G}, Proposition 2.40),
$int(M)=int(\ol M)$, where $\ol M$ is the normalization of $M$. Since
normalization of a finitely generated monoid is finitely generated
(see \cite{B-G}, Proposition 2.22), $\ol M$ is a $\Phi$-simplicial
normal monoid.  By (\cite{G2}, Theorem 3.1), $int(M)=int(\ol M)$ is a
filtered union of truncated (normal) monoids (see \cite{G2}, Definition 2.2). 
Since $P$ is finitely generated,
we get $P$ is defined over $R[N]$, where $N\subset int(M)$ is a truncated
monoid.  By (\ref{3c1}), {\it Serre dim} $R[N]\leq d$. Hence $P$ has a
unimodular element.  Therefore {\it Serre dim} $R[int(M)]\leq d$.
$\hfill \gj$
\end{proof}
\medskip

\noindent{\bf Assumptions:} In the following examples,
 $R$ is a ring of dimension $d$,  
Monoid operations are written
multiplicatively and $K(M)$ denotes the group of
fractions of monoid $M$.

\begin{example}\label{3e4}
 For $n>0$, consider the monoid $M\subset \BZ_+^r$ generated by
 $\{t_1^{i_1}t_2^{i_2}\ldots t_r^{i_r} |\sum i_j=n\}$.
 Then $M$ is a $\Phi$-simplicial normal
 monoid. For integers $c_i=nk_i+1$, $k_i>0$ and
 $i=1,\ldots,r-1$, consider  $\eta \in \Aut_{R[t_r]}(R[t_1,\ldots, t_r])$ 
 defined by $t_i\mapsto t_i+t_r^{c_i}$
 for $i=1,\ldots,r-1$. 
  
 A typical monomial in the expansion of $\eta(t_1^{i_1}\ldots
 t_{r-1}^{i_{r-1}}t_r^{i_r})=(t_1+t_r^{c_1})^{i_1}\ldots
 (t_{r-1}+t_r^{c_{r-1}})^{i_{r-1}}t_r^{i_r}$ will look like
 $(t_1^{i_1-l_1}t_r^{c_1l_1})\ldots
 (t_{r-1}^{i_{r-1}-l_{r-1}}t_r^{c_{r-1}l_{r-1}})t_r^{i_r}=(t_1^{i_1-l_1}\ldots
 t_{r-1}^{i_{r-1}-l_{r-1}}t_r^{l_1+\ldots+l_{r-1}+i_r})t_r^{n(k_1l_1+\ldots+k_{r-1}l_{r-1})}$
 which belong to $M$.  So $\eta(R[M])\subset R[M]$.  Similarly,
 $\eta^{-1}(R[M])\subset R[M]$. Hence $\eta$ restricts to an
 $R$-automorphism of $R[M]$. Therefore $\eta$ satisfies the property
 of (\ref{3d11}) for $M$. It is easy to see that $M_m=M\cap
 \{t_1^{s_1}\ldots t_m^{s_m}|s_i\geq 0\}$ for $1\leq m \leq r-1$ also
 satisfy this property.  Hence $M\in \CC(\Phi)$. By (\ref{3t5}), {\it
   Serre dim} $R[M]\leq d$.  $\hfill \gj$
\end{example}

\begin{example}\label{3e2}
 Let $M$ be a $\Phi$-simplicial monoid generated by monomials
$t_1^2,t_2^2,t_3^2,t_1t_3,t_2t_3.$  For
 integers $c_j=2k_j-1$ with $k_j>1$, consider the automorphism
 $\eta\in \Aut_{R[t_3]}(R[t_1,t_2,t_3])$ defined by $t_j\mapsto
 t_j+t_3^{c_j}$ for $j=1,2$. 
Then it is easy to see that $\eta$ restricts to an automorphism of $R[M]$.

We claim that $M$ is seminormal
but not normal.  For this, let
$$z=(t_3^2)^{-1}(t_1t_3)(t_2t_3)=t_1t_2\in K(M)\setminus M,~ \rm{but} ~ z^2
\in M,$$ showing that $M$ is not normal.  For seminormality, let
$$z=(t_1^2)^{\ga_1}(t_2^2)^{\ga_2}(t_3^2)^{\ga_3}
(t_1t_3)^{\ga_{4}}(t_{2}t_3)^{\ga_5}\in K(M) ~ \text{with} ~ \ga_i\in \BZ ~ \rm{and} ~
z^2,z^3\in M.$$  We may assume that $0\leq \ga_{4},\ga_{5}\leq 1$. Now
$z^2\in M \Rightarrow \ga_1,\ga_2\geq 0$ and $2\ga_3+\ga_4 +\ga_5\geq
0$. If $\ga_3< 0$, then $\ga_4=\ga_5=1$ and $\ga_3=-1$.  In this case,
$z^3=(t_1^{2\ga_1+1}t_2^{2\ga_2+1})^3\notin M$, a contradiction.
Therefore $\ga_3\geq 0$ and $z\in M$. Hence $M$ is seminormal. 
It is easy to see that $M\in \CC(\Phi)$. By
(\ref{3t5}), {\it Serre dim} $R[t_1^2,t_2^2,t_3^2,t_1t_3,t_2t_3]\leq
  d$.

%
%
  $\hfill \gj$
\end{example}

\begin{remark}\label{3r11}
(1) Let $R$ be a ring and $P$ a projective $R$-module of rank $\geq
2$. Let $\ol R$ be the seminormalization of $R$. It follows from
arguments in Bhatwadekar (\cite{Bh89}, Lemma 3.1) that $P\ot_R \ol R$ has a
unimodular element if and only if $P$ has a unimodular element. 

(2) Assume $R$ is a ring of dimension $d$ and $M\in \CC(\Phi)$. Let $\ol M$ be
the
seminormalization of $M$. If $\ol M$ is in $C(\Phi)$, then {\it Serre dim} $R[M]
\leq max\{1,d\}$, using (\cite{Bh89} and \ref{3t5}).


(3) Let $(R,\mm,K)$ be a regular local ring of dimension $d$ containing a
field $k$ such that either char $k=0$ or char $k=p$ and tr-deg
$K/\BF_p\geq 1$. Let $M$ be a seminormal monoid. Then, using Popescu (\cite{Po}, Theorem 1) and Swan
(\cite{Swan}, Theorem 1.2), we get {\it Serre dim} $R[M]=0$. If
$M$  is not seminormal, then {\it Serre dim} $R[M]=1$ using
(\cite{G1}, \cite{Bh89} and \cite{Swan}).
 \end{remark}


\begin{example}\label{3e1}
 For a monoid $M$, $\ol M$ denotes the seminormalization of $M$.
\begin{enumerate}
\item 
 Let $M\subset \BZ_+^2$ be a $\Phi$-simplicial monoid generated by $t_1^n, t_1t_2, t_2^n$,
 where $n\in \BN$. We claim that $M$ is normal. To see this, let
$z=t_1^it_2^j=(t_1^n)^p(t_1t_2)^q(t_2^n)^r\in K(M)$ with $p,q,r\in \BZ$ such that
$z^t\in M$ for some $t>0$. Then $i,j\geq 0$. We need to show that
$z\in M$.  We may assume that $0\leq q<n$. Since $i,j\geq 0$, we get
$p,r\geq 0$.  Thus $z\in M$ and $M$ is normal.  Hence, by (\ref{3c11}),
{\it Serre dim} $R[t_1^n,t_1t_2,t_2^n]\leq d$. 

\item 
The monoid $M\subset \BZ_+^2$ generated by $t_1^2, t_1t_2^2, t_2^2$
 is seminormal but not normal.  
 For this, let $z=(t_1t_2^2)(t_2^2)^{-1}=t_1 \in K(M)\setminus M$. Then
$z^2\in M$ showing that $M$ is not normal. For seminormality, let
$z=(t_1^2)^{\ga}(t_1t_2^2)^\gb(t_2^2)^\gm\in K(M)$ with $\ga,\gb,\gm\in \BZ$
be such that $z^2,z^3\in M$. We may assume $0\leq \gb\leq 1$.  If
$\gb=0$, then $\ga,\gm\geq 0$ and hence $z\in M$. If $\gb=1$, then
$z^2\in M$ implies $\ga\geq 0$ and $\gm +1\geq 0$. If $\gm=-1$, then
$z^3=(t_1)^{6\ga+3}\notin M$, a contradiction. Hence $\gm\geq 0$,
proving that $z\in M$ and $M$ is seminormal.
It is easy to see that $M\in \CC(\Phi)$.
Therefore, by
(\ref{3t5}), {\it Serre dim} $R[t_1^2,t_1t_2^2,t_2^2]\leq d$.

\item
Let $M$ be a monoid generated by $(t_1^2,t_1t_2^j, t_2^2)$, where
$j\geq 3$. Then $M$ is not seminormal.  For this, if
$z=(t_1t_2^j)(t_2^2)^{-1}=t_1t_2^{j-2}\in K(M)\setminus M$, then
$z^2=t_1^2t_2^{2(j-2)}$ and $z^3=(t_1^2)(t_1t_2^j)(t_2^{2j-6})$ are in
$M$, showing that $M$ is not seminormal.
 
 If $j=3$, then observe that $t_1t_2$ belongs to $\ol M$. Since the
 monoid generated by $t_1^2,t_1t_2,t_2^2$ is normal, we get that $\ol
 M$ is generated by $t_1^2,t_1t_2,t_2^2$. Hence {\it Serre dim} $R[\ol
   M]\leq d$ by $(1)$ above.
 
 Observe that if $j$ is odd, then $\ol M=(t_1^2,t_1t_2,t_2^2)$ and if $j$ is even, then $\ol M=(t_1^2,t_1t_2^2,t_2^2)$.
 So {\it Serre dim} $R[\ol M]\leq d$ by $(1,2)$ above.
 
 In both cases, applying (\ref{3r11}(1)), we get {\it Serre dim}
 $R[M]\leq$ max $\{1,d\}$.
 
\item 
Let $M$ be a monoid generated by $(t_1^3,t_1t_2^2,t_2^3)$
 Then $M$ is not seminormal. For
this, let $z=(t_1t_2^2)^{2}t_2^{-3}\in K(M)\setminus M$. Then
$z^2=t_1^3(t_1t_2^2) \in M$ and
$z^3=t_1^6t_2^{3}\in M$. Hence seminormalization of $M$ is $\ol M=(t_1^3,t_1^2t_2,t_1t_2^2,t_2^3)$.
By (\ref{3e4}), {\it Serre dim} $R[\ol M]\leq d$. Therefore, applying (\ref{3r11}(1)), 
we get {\it Serre dim} $R[M]\leq$ max $\{1,d\}$.
\end{enumerate}
$\hfill \gj$
\end{example}


\subsection{Monoid algebras over $1$-dimensional rings}

The following result proves	 (\ref{1a}(i)).

\begin{theorem}\label{3t15}
 Let $R$ be a ring of dimension $1$ and $M$ a $c$-divisible monoid. If
 $P$ is a projective $R[M]$-module of rank $r\geq 3$, then $P\cong \wedge^rP\op R[M]^{r-1}$.
\end{theorem}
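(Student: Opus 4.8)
The plan is to reduce the theorem to a splitting result over a suitable ring where we can exploit $c$-divisibility. Since $M$ is $c$-divisible and $\dim R=1$, the key structural feature is that the multiplication-by-$c$ map on $M$ gives rise to a Frobenius-like endomorphism of $R[M]$, and one expects a direct limit presentation where $R[M]$ is built from polynomial-type pieces. The target decomposition $P\cong \wedge^r P\op R[M]^{r-1}$ is the standard conclusion that $P$ has a free direct summand of corank one once we know $P$ has a unimodular element and the relevant cancellation holds; indeed, if $P$ has a unimodular element then $P\cong R[M]\op Q$ with $\mathrm{rank}\,Q=r-1$, and the isomorphism $\wedge^r P\cong \wedge^{r-1}Q$ combined with a cancellation/Serre-splitting argument yields the stated form.

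First I would establish that $P$ has a unimodular element. The natural approach is to pass to the normalization and seminormalization: since $c$-divisible monoids are seminormal (as noted in the excerpt), and since over the base field-like fibers projective modules behave well, one reduces to controlling what happens over the $1$-dimensional ring $R$. I would write $M$ as a filtered union of finitely generated submonoids and use that $P$ is finitely generated, hence defined over $R[N]$ for some finitely generated $N$; but the $c$-divisibility is essential and is not inherited by $N$, so instead I expect to use the $c$-th power endomorphism $\phi_c:R[M]\to R[M]$ induced by $m\mapsto cm$, which is surjective precisely because $M$ is $c$-divisible, exhibiting $R[M]$ as a filtered colimit along $\phi_c$. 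This is the technique attributed in the excerpt to Kang, Roy and Gubeladze.

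The central mechanism I would use is Proposition \ref{3p8} (Roy's patching proposition). The plan is to find a non-zerodivisor $s\in R$ such that $P_s$ is free, form the conductor-type cartesian square relating $R[M]$, its localization, and the reduction modulo $s$, and verify the three hypotheses: that $\wedge^r P$ matches on the candidate module $Q=\wedge^r P\op R[M]^{r-1}$, that $P_s$ and $Q_s$ are free, and that the extension to $B=R[M]/sR[M]$ (which is a monoid algebra over a $0$-dimensional ring, hence has free projective modules by Theorem \ref{G1}) carries a unimodular element with $\SL_r=\E_r$. The condition $r\ge 3$ enters here to guarantee the elementary-equals-special-linear hypothesis and the stable-range behavior needed to split off a free summand.

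The main obstacle I expect is verifying the $\SL_r(B_{f(s)})=\E_r(B_{f(s)})$ hypothesis of \ref{3p8} over the localized reduction, and more fundamentally arranging a cartesian square to which \ref{3p8} applies while respecting the $c$-divisible structure. Because $c$-divisibility forces $M$ to be non-finitely-generated in general, I cannot directly quote finite-generation results, and the colimit along $\phi_c$ must be handled carefully so that unimodular elements and automorphisms produced at finite levels are compatible in the limit. The delicate point is to produce, over the $0$-dimensional fiber, both the freeness (from seminormality via \ref{G1}) and the elementary action that transports one unimodular element to another, then lift and patch; once $P$ has a unimodular element, the passage to $P\cong \wedge^r P\op R[M]^{r-1}$ follows from a determinant-trick splitting together with \ref{3p8} applied to compare $P$ with $\wedge^r P\op R[M]^{r-1}$ directly.
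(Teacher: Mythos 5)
Your proposal contains a genuine structural gap, centred on a misapplication of Proposition \ref{3p8}. That proposition requires a cartesian square in which $s$ maps to a \emph{non-zerodivisor} of $B$ and the bottom row is the localization of the top row at $s$; your plan to take $B=R[M]/sR[M]$ (the ``reduction modulo $s$'') cannot work, since $f(s)=0$ there, and a square mixing $R[M]$, $R[M]_s$ and $R[M]/sR[M]$ is not of the form \ref{3p8} handles. The paper's actual choice is $B=\hat R[M]$ where $\hat R$ is the $s$-adic completion of $R$: this is precisely what makes the square cartesian with $s$ a non-zerodivisor on both rows, it yields $\SL_r(\hat R_s[M])=\E_r(\hat R_s[M])$ by Gubeladze since $\hat R_s$ is zero-dimensional, and --- crucially --- $\hat R_{red}$ has \emph{finite normalization}, which is the reduction your argument never supplies. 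With finite normalization in hand, the paper first settles that case by a conductor square $R[M]$, $\ol R[M]$, $(R/C)[M]$, $(\ol R/C)[M]$: Swan's theorem gives the result over the normal ring $\ol R$, Gubeladze gives freeness of $P/CP$ and $P'/CP'$ over the zero-dimensional $(R/C)[M]$, and $\SL_r((\ol R/C)[M])=\E_r((\ol R/C)[M])$ for $r\geq 3$ allows the patching. Your proposal mentions neither the normal base case (Swan) nor any route to finite normalization; the colimit along the $c$-th power endomorphism $\phi_c$ is left as an unexecuted hope and plays no role in the paper's argument.

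Your overall logic is also backwards in a way that matters. You propose to first produce a unimodular element in $P$ and then upgrade to $P\cong \wedge^r P\op R[M]^{r-1}$ by cancellation; but producing a unimodular element in $P$ over $R[M]$, with $M$ non-finitely-generated and $R$ one-dimensional non-normal, is essentially as hard as the theorem itself, and you give no mechanism for it. The paper avoids this entirely: it compares $P$ \emph{directly} with the candidate $P'=\wedge^r P\op R[M]^{r-1}$ in Proposition \ref{3p8}, where the hypothesis that $Q\ot_A B$ have a unimodular element is verified trivially because $P'$ has a free direct summand --- no unimodular element of $P$ is ever needed as an input. Note also that your appeal to Theorem \ref{G1} for freeness over the fibre ring is stated for fields; over the zero-dimensional rings that actually occur ($R/C$, $\hat R_s$) the paper cites Gubeladze's results directly, which is where seminormality of $c$-divisible monoids and the condition $r\geq 3$ genuinely enter.
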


\begin{proof}
If $R$ is normal, then we are done by Swan \cite{Swan}. Assume $R$ is
not normal.  

{\it Case 1.} Assume $R$ has finite normalization.  Let
$\ol R$ be the normalization of $R$ and $C$ the conductor ideal of the
extension $R\subset \ol R$. Then $\hh C=1$.  Hence $R/C$ and
$\ol R/C$ are zero dimensional rings.  Consider the following fiber
product diagram
\[
\xymatrix{
R[M] \ar@{->}[r]
     \ar@{->}[d]
& \ol R[M] 
     \ar@{->}[d]
\\
(R/C)[M] \ar@{->}[r]
     & (\ol R/C)[M]      
}
\]
If $P'=\wedge^rP\op R[M]^{r-1}$, then by Swan \cite{Swan}, $P\ot \ol
R[M]\cong \wedge^r(P\ot \ol R[M])\op \ol R[M]^{r-1} \cong P'\ot \ol
R[M]$.  By Gubeladze \cite{Gu}, $P/CP$ and $P'/CP'$ are free
$(R/C)[M]$-modules.  Further, $\SL_r((\ol R/C) [M])=\E_r((\ol
R/C)[M])$ for $r\geq3$, by Gubeladze \cite{Gu90}.  Now using
standard arguments of fiber product diagram, we get $P\cong
P'$.

{\it Case 2.} Now $R$ need not have finite normalization. 
 We may assume $R$ is a reduced ring with connected spectrum. Let
 $S$ be the set of all non-zerodivisors of $R$. 
 By \cite{Gu}, $S^{-1}P$ is a free $S^{-1}R[M]$-module. 
Choose $s\in S$
 such that $P_s$ is a free $R_s[M]$-module.
 
  Now we follow the arguments of Roy (\cite{Roy82},Theorem 4.1). Let $\hat R$
 denote the $s$-adic completion $R$. Then $\hat R_{red}$ has a finite normalization. 
 Consider the following fiber product diagram
 \[
\xymatrix{
R[M] \ar@{->}[r]
     \ar@{->}[d]
&\hat R[M] 
     \ar@{->}[d]
\\
R_s[M] \ar@{->}[r]
     &\hat R_s[M]      
}
\]
Since $\hat R_s$ is a zero dimensional ring, by \cite{Gu90},
$\SL_r(\hat R_s[M])=\E_r(\hat R_s[M])$ for $r\geq3$. If
$P'=\wedge^rP\op R[M]^{r-1}$, then $P_s$ and $P'_s$ are free
$R_s[M]$-modules and by Case 1, $P\ot \hat R[M]\cong P'\ot \hat R[M]$.
By (\ref{3p8}), $P\cong P'$. This completes the proof. $\hfill \gj$
\end{proof}


The following result is due to Kang (\cite{K}, Lemma 7.1 and Remark).

\begin{lemma}\label{3l18}
Let $R$ be a $1$-dimensional uni-branched affine algebra over an
algebraically closed field, $\ol R$ the normalization of $R$ and $C$
the conductor ideal of the extension $R\subset \ol R$.  Then $\ol
R/C=R/C+a_1R/C+\cdots+a_mR/C$, where $a_i\in \sqrt{C}$ the radical
ideal of $C$ in $\ol R$.
\end{lemma}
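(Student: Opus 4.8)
The plan is to reduce the statement to an isomorphism between the reduced Artinian rings obtained by killing nilpotents in $R/C$ and in $\ol R/C$, and then to read off the generators $a_i$ from the nilradical $\sqrt C/C$. First I would record the finiteness facts that make everything Artinian. Since $R$ is a $1$-dimensional affine algebra over $k$, the normalization $\ol R$ is a module-finite extension of $R$ and $\hh C=1$, so $C$ is a nonzero ideal of both $R$ and $\ol R$ and the quotients $R/C$ and $\ol R/C$ are Artinian; every prime containing $C$ is maximal. In particular $\sqrt C/C$ is a finitely generated $R/C$-module, being a submodule of the module-finite (hence Noetherian) $R/C$-module $\ol R/C$.

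The main step is to analyze the reduced quotients. Writing $\sqrt C_R:=R\cap\sqrt C$ for the radical of $C$ in $R$, the inclusion $R\subset\ol R$ induces a $k$-algebra homomorphism $R/\sqrt C_R\ra\ol R/\sqrt C$ whose kernel $(R\cap\sqrt C)/\sqrt C_R$ vanishes, so it is injective. Because $k$ is algebraically closed, both source and target are reduced Artinian $k$-algebras all of whose residue fields equal $k$ (Nullstellensatz); hence each is a finite product of copies of $k$, of $k$-dimension equal to the number of maximal ideals containing $C$. Now the uni-branched hypothesis enters: together with lying-over for the integral extension $R\subset\ol R$, it gives a bijection between the maximal ideals of $\ol R$ containing $C$ and those of $R$ containing $C$ (surjectivity from integrality, injectivity from the uniqueness in the definition). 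Thus the two products have the same number of factors, and the injection $R/\sqrt C_R\ra\ol R/\sqrt C$ is an isomorphism by a dimension count over $k$.

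To finish, I would observe that this isomorphism makes the composite $R/C\ra\ol R/C\ra\ol R/\sqrt C$ surjective, which means $\ol R=R+\sqrt C$. Since $C\subseteq R$, the image of $R$ in $\ol R/C$ is exactly $R/C$, so reducing modulo $C$ yields $\ol R/C=R/C+\sqrt C/C$. Choosing $R/C$-module generators $a_1,\ldots,a_m\in\sqrt C$ of the finitely generated module $\sqrt C/C$ then gives $\ol R/C=R/C+a_1R/C+\cdots+a_mR/C$ with each $a_i\in\sqrt C$, as desired.

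The main obstacle is the crux of the second paragraph: proving that $R/\sqrt C_R\ra\ol R/\sqrt C$ is an isomorphism. The equality of the number of maximal ideals over $C$ is precisely where "uni-branched" is indispensable, and the algebraically closed hypothesis is what lets me identify each reduced quotient with $k^{n}$ so that injectivity plus equal dimension forces surjectivity. Everything else (the finiteness of $\ol R$ over $R$, $\hh C=1$, and the passage from $\ol R=R+\sqrt C$ to a module presentation of $\ol R/C$) is routine.
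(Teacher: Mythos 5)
Your proof is correct, and there is in fact no in-paper argument to compare it with: the authors state this lemma as a quoted result of Kang (\cite{K}, Lemma 7.1 and Remark) and give no proof, so your argument functions as a self-contained replacement for the citation. Each step checks out. Module-finiteness of the normalization of a one-dimensional affine algebra makes $R/C$ and $\ol R/C$ Artinian and makes $\sqrt{C}/C$ a finitely generated $R/C$-module; the contraction map from maximal ideals of $\ol R$ containing $C$ to maximal ideals of $R$ containing $C$ is well defined (contractions of maximal ideals along an integral extension are maximal), surjective by lying over, and injective precisely by the uni-branched hypothesis; and the Nullstellensatz identifies the reduced quotients $R/(R\cap \sqrt{C})$ and $\ol R/\sqrt{C}$ with products of copies of $k$ indexed by those maximal ideals, so your injection between them is forced to be an isomorphism by a $k$-dimension count. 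That yields $\ol R = R+\sqrt{C}$, and since $C\subseteq R$ the image of $R$ in $\ol R/C$ is $R/C$, giving $\ol R/C = R/C+\sqrt{C}/C$ and the stated generators $a_i\in\sqrt{C}$. Two minor points, neither of which affects validity: the clause ``$\hh C=1$, so $C$ is a nonzero ideal'' has the logic backwards --- what you actually need, and what holds because $\ol R$ is module-finite over $R$ inside the total quotient ring, is that $C$ contains a nonzerodivisor, whence every prime containing $C$ has height at least one and is therefore maximal in the one-dimensional ring; and when you localize the count of maximal ideals you should note that primes of $\ol R$ containing $\sqrt{C}$ are the same as those containing $C$, which you use implicitly when matching the factors of the two product decompositions.
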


\begin{lemma}\label{3t17}
Let $R$ be a $1$-dimensional ring, $\ol R$ the normalization of $R$ and $C$ the
conductor ideal of the extension $R\subset \ol R$.  Assume $\ol
R/C=R/C+a_1R/C+\cdots+a_mR/C$, where $a_i\in \sqrt{C}$ the radical
ideal of $C$ in $\ol R$.  Let $M$ be a monoid and write $A=\ol
R/C$. 

(i) If $\gs\in \SL_n(A[M])$, then $\gs=\gs_1\gs_2$, where
$\gs_1\in \SL_n((R/C)[M])$ and $\gs_2\in \E_n(A[M])$.

(ii) If $P$ is a projective $R[M]$-module of rank $r$, then $P\cong
\wedge^rP\op R[M]^{r-1}$.
\end{lemma}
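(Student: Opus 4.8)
The plan for (i) is to exploit that the displayed hypothesis presents $A=\ol R/C$ as $A=B+J$, where $B:=R/C$ and $J$ is a nilpotent ideal. Indeed, since each $a_i\in\sqrt C$ has nilpotent image in $A$, the ideal $J=(a_1,\ldots,a_m)A$ is nilpotent (finitely many nilpotent generators in a Noetherian ring), and $A=B+J$ gives $A/J\cong B/(B\cap J)$. Consequently $J[M]$ is a nilpotent ideal of $A[M]$ with $A[M]/J[M]\cong (A/J)[M]$, which is the quotient of $B[M]$ by the nilpotent ideal $(B\cap J)[M]$. Given $\sigma\in\SL_n(A[M])$, I would reduce it modulo $J[M]$ and lift the resulting element of $\SL_n((A/J)[M])$ to some $\sigma_1\in\SL_n((R/C)[M])$; lifting $\SL_n$ along a surjection with nilpotent kernel is possible, since one lifts the entries of a matrix and then corrects the determinant by the unit $1+(\text{nilpotent})$. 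Viewing $\sigma_1$ in $\SL_n(A[M])$ through $B[M]\inj A[M]$, we then have $\sigma_1^{-1}\sigma\equiv I_n\pmod{J[M]}$.

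It then remains to show that any matrix congruent to $I_n$ modulo a nilpotent ideal is a product of elementary matrices, i.e. $\SL_n(A[M],J[M])\subseteq\E_n(A[M])$; setting $\sigma_2=\sigma_1^{-1}\sigma$ gives the required factorization $\sigma=\sigma_1\sigma_2$. Filtering by the powers $J\supseteq J^2\supseteq\cdots$ reduces this to the case where the ideal squares to zero, which is a direct computation: clear the off-diagonal entries of the first column and row by elementary operations, leaving a matrix $\diag(u,u^{-1})$ with $u=1+(\text{nilpotent})$, and then invoke Whitehead's identity $\diag(u,u^{-1})\in\E_2\subseteq\E_n$. I expect (i) to be routine, the only delicate point being this elementary factorization modulo a square-zero ideal.

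For (ii) the plan is Milnor patching along the conductor square
\[
\begin{array}{ccc}
R[M] & \ra & \ol R[M]\\
\downarrow & & \downarrow\\
(R/C)[M] & \ra & (\ol R/C)[M]
\end{array}
\]
(cartesian, with the right-hand vertical map surjective), comparing $P$ with $P'=\wedge^rP\op R[M]^{r-1}$; note $\wedge^rP'\cong\wedge^rP$, and I may assume $r\geq2$. Over $\ol R[M]$, with $\ol R$ normal of dimension $1$, Swan \cite{Swan} gives $P\ot\ol R[M]\cong\wedge^r(P\ot\ol R[M])\op\ol R[M]^{r-1}\cong P'\ot\ol R[M]$; I fix such an isomorphism $\alpha$ and reduce it modulo $C$ to obtain $P\ot A[M]\cong P'\ot A[M]$. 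Next I would establish $P/CP\cong P'/CP'$ over $(R/C)[M]$: as in (i), both $(R/C)[M]$ and $A[M]$ are nilpotent thickenings of $(A/J)[M]$, so lifting of idempotents makes reduction to $(A/J)[M]$ a bijection on isomorphism classes of projective modules over either ring; since $P$ and $P'$ agree after $\ot A[M]$, they agree after reduction to $(A/J)[M]$, hence already over $(R/C)[M]$. Fix such an isomorphism $\beta$. This step, invisible in Theorem \ref{3t15} (where $M$ seminormal makes $P/CP,\,P'/CP'$ free), is an essential new ingredient.

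Finally $\alpha$ and $\beta$ furnish two trivializations of $P'\ot A[M]$ differing by some $\tau\in\Aut_{A[M]}(P'\ot A[M])$, and the square patches $\alpha,\beta$ to an isomorphism $P\cong P'$ over $R[M]$ precisely when $\tau$ lies in the product of the images of $\Aut(P'\ot\ol R[M])$ and $\Aut(P'/CP')$. Writing $P'\ot A[M]=A[M]\op Q$ with $Q=L\op A[M]^{r-2}$ and $L=\wedge^rP\ot A[M]$, I would first use $\wedge^rP\cong\wedge^rP'$ over $R[M]$ to patch determinants, adjusting $\alpha,\beta$ so that $\tau$ acts as the identity on the line bundle $L$ and through transvections on the free part; then run the nilpotent-reduction argument of (i) inside the transvection group $\E(A[M]\op Q)$ of the preliminaries to factor this special part of $\tau$ as a product of an automorphism defined over $(R/C)[M]$ and a transvection over $A[M]$. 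The first factor is absorbed into $\beta$, while the transvection, being elementary, lifts along $\ol R[M]\surj A[M]$ and is absorbed into $\alpha$; this makes $\tau$ trivial and yields $P\cong P'$. The hard part will be exactly this last step: separating the Picard (determinant) contribution of the non-free summand $L$, controlled by $\wedge^rP\cong\wedge^rP'$, from the free part, where (i) is what permits the transition to be distributed across \emph{both} corners of the square — the extra flexibility that replaces the identity $\SL_r=\E_r$ over $(\ol R/C)[M]$ available only in the seminormal case.
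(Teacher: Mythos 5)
Your proposal is correct and is, in substance, the route the paper takes, so the comparison is mostly about what you make explicit versus what the paper compresses. For (i) the paper computes rather than lifts: it splits each entry as $b_{ij}=(b_{ij})_0+(b_{ij})_1a_1+\cdots+(b_{ij})_ma_m$, sets $\ga=((b_{ij})_0)$ over $(R/C)[M]$, observes $\det \gs=1=\det\ga+c$ with $c$ nilpotent, corrects by $\gb=\diag(1/(1-c),1,\ldots,1)$ to get $\gs_1=\ga\gb\in\SL_n((R/C)[M])$, and then simply asserts that $\gs_2=\gs_1^{-1}\gs$, being the identity modulo the nilpotent ideal of $A[M]$, lies in $\E_n(A[M])$; your abstract lift along the nilpotent surjections $B[M]\surj (A/J)[M]$ and $A[M]\surj (A/J)[M]$ produces a $\gs_1$ with the same congruence property, and your filtration-plus-Whitehead argument is precisely the verification the paper leaves unsaid, so (i) is essentially identical. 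For (ii) the paper's entire proof is the sentence ``follow the proof of (\ref{3t15}) and use (i)'', and your write-up is a faithful unpacking of that: same conductor square, same appeal to Swan over $\ol R[M]$, same mechanism of distributing the transition automorphism over both corners of the square with (i) replacing the identity $\SL_r=\E_r$ that was available in (\ref{3t15}). You correctly isolate the two points where ``follow (\ref{3t15})'' cannot be taken literally: there $P/CP$ and $P'/CP'$ were \emph{free} by Gubeladze's theorem, which needs $M$ seminormal, whereas here one must instead produce $P/CP\cong P'/CP'$ through the common nilpotent thickening $(A/J)[M]$, exactly as you do; and, for the same reason, $P'\ot A[M]$ need not be free, so the matrix statement (i) has to be promoted to a statement about the transvection group $\E(A[M]\op Q)$ — a genuine issue the paper's one-liner passes over. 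One caution on your last step: the Whitehead identity you invoke does not typecheck on $A[M]\op L$ when the line bundle $L$ is not free (the scalar off-diagonal entries would have to be elements of $L$ and $L^*$); what does work, and completes your sketch, is the commutator computation showing $\Delta_{\phi}\Gamma_{q}\Delta_{-\phi}\Gamma_{-q}$ agrees with $\diag(1+t,(1+t)^{-1})$, $t=\phi(q)$, up to factors congruent to the identity modulo $J^2$, applied with $q\in JL$ and using $J=\sum L^*(JL)$ together with the filtration $J\supseteq J^2\supseteq\cdots$; this yields that determinant-one automorphisms of $A[M]\op Q$ congruent to the identity modulo $J[M]$ are products of transvections, which then lift along $P'\ot\ol R[M]\surj P'\ot A[M]$ by projectivity. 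With that sub-lemma supplied, your argument is complete — and noticeably more careful than the paper's own treatment of (ii).
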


\begin{proof}
(i) Let $\gs=(b_{ij})\in \SL_n(A[M])$.  Write
$b_{ij}=(b_{ij})_0+(b_{ij})_1a_1+\cdots+(b_{ij})_ma_m$, where
$(b_{ij})_l \in (R/C)[M]$. If $\ga=((b_{ij})_0)$, then
$det(\gs)=1=det(\ga)+c$, where $c\in (\sqrt{C}/C)[M]$. Since $c\in
(R/C)[M]$ is nilpotent, $det(\ga)$ is a unit in $(R/C)[M]$. Let $\gb=$
diagonal $(1/(1-c),1,\ldots,1) \in \GL_n((R/C)[M])$ and $\gs_1=\ga \gb
\in \SL_n((R/C)[M])$.

Note that $\gs_1^{-1}\gs=\gb^{-1} \ga^{-1}\gs=\gb^{-1}\, 1/(1-c)\, \ol \ga \gs$,
where $\ol \ga=((\ol b_{ij})_0)$, $(\ol b_{ij})_0$ are minors of $ (b_{ij})_0$.
$$\gs_{2}:=\gs_1^{-1}\gs=\left[
\begin{array}{cccc}
1 & 0&\cdots & 0\\
0 & \frac{1}{1-c}& \cdots & 0\\
\vdots&\vdots&\cdots &\vdots\\
0 & 0 &\cdots & \frac{1}{1-c}
\end{array}\right]
\left[
\begin{array}{cccc}
1+c_{11} & c_{12}&\cdots & c_{1n}\\
c_{21} & 1+c_{22}& \cdots & c_{2n}\\
\vdots&\vdots&\cdots &\vdots\\
c_{n1} & c_{n2} &\cdots & 1+c_{nn}

\end{array}\right], $$
where $c_{ij}\in (\sqrt{C}/C)[M]$.

Note that $\gs_2\in \SL_n(A[M])$ and $\gs_2=Id$ modulo the nilpotent
ideal of $A[M]$.  Hence $\gs_2\in \E_n(A[M])$. Thus we get
$\gs=\gs_1\gs_2$ with the desired properties. 

(ii) Follow the proof of (\ref{3t15}) and use (\ref{3t17}(i)) to get the
result. 
$\hfill \gj$
\end{proof}

Now we prove (\ref{1a}(ii))
which follows from (\ref{3l18}) and (\ref{3t17}).

\begin{theorem}\label{3t16}
Let $R$ be a $1$-dimensional uni-branched affine algebra over an
algebraically closed field and $M$ a monoid. If $P$ is a projective
$R[M]$-module of rank $r$, then $P\cong \wedge^rP\op R[M]^{r-1}$.
\end{theorem}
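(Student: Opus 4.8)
The plan is to read the theorem off the two preceding lemmas, with no further machinery. The hypotheses imposed on $R$ in the statement---$1$-dimensional, uni-branched, and affine over an algebraically closed field---are word for word the hypotheses of Kang's lemma (\ref{3l18}). (Implicit here is the standard fact that an affine algebra over a field has module-finite normalization, so the conductor $C$ of $R\subset \ol R$ is a genuine nonzero ideal and, since $\dim R=1$, both $R/C$ and $\ol R/C$ are zero-dimensional.) Thus (\ref{3l18}) applies and yields the decomposition $\ol R/C = R/C + a_1R/C + \cdots + a_mR/C$ with every $a_i\in\sqrt{C}$.

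This decomposition is precisely the standing assumption of (\ref{3t17}). Therefore (\ref{3t17}(ii)) applies directly to our projective $R[M]$-module $P$ of rank $r$ and gives $P\cong \wedge^rP\op R[M]^{r-1}$, which is the claim. I would emphasize that, unlike in (\ref{3t15}), no lower bound on $r$ is required: the rank restriction there came from Gubeladze's $\SL_r=\E_r$, whereas the factorization used here works for all $n$.

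The substance of the argument lives entirely in (\ref{3t17}), and this is where I expect the only real difficulty. Its proof follows the Milnor-patching scheme of (\ref{3t15}) along the conductor square
\[
\xymatrix{
R[M] \ar@{->}[r]\ar@{->}[d] & \ol R[M] \ar@{->}[d] \\
(R/C)[M] \ar@{->}[r] & (\ol R/C)[M]
}
\]
Over $\ol R[M]$ one splits off the free part by Swan's theorem (legitimate since $\ol R$ is normal of dimension $1$), obtaining $P\ot \ol R[M]\cong P'\ot \ol R[M]$ with $P'=\wedge^rP\op R[M]^{r-1}$, and then patches along the corner $(\ol R/C)[M]$. In (\ref{3t15}) the monoid was $c$-divisible, hence seminormal, so Gubeladze's results furnished both freeness over $(R/C)[M]$ and the equality $\SL_r=\E_r$ over $(\ol R/C)[M]$ that drive the patching; here $M$ is arbitrary and those inputs are unavailable. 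The role of the factorization (\ref{3t17}(i))---writing each $\gs\in \SL_n(A[M])$, where $A=\ol R/C$, as $\gs_1\gs_2$ with $\gs_1\in \SL_n((R/C)[M])$ and $\gs_2\in \E_n(A[M])$---is exactly to recover the patching datum: the elementary factor $\gs_2$ is absorbed by lifting it to $\ol R[M]$, while $\gs_1$ descends to $(R/C)[M]$, so the transition automorphism over the corner is trivialized. The conceptual heart, then, is the recognition that the uni-branched hypothesis, through Kang's conductor decomposition, is precisely what makes this $\SL_n$-factorization available and thereby substitutes for the seminormality of $M$ used in the earlier theorem.
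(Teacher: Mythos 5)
Your proposal is exactly the paper's proof: the paper deduces Theorem \ref{3t16} by applying Kang's Lemma \ref{3l18} to get the conductor decomposition $\ol R/C=R/C+a_1R/C+\cdots+a_mR/C$ with $a_i\in\sqrt{C}$, and then invoking Lemma \ref{3t17}(ii), whose proof in turn follows the conductor-square patching of Theorem \ref{3t15} with the factorization of \ref{3t17}(i) standing in for Gubeladze's $\SL_r(\,\cdot\,)=\E_r(\,\cdot\,)$, just as you describe. Your supplementary observations (module-finiteness of the normalization of an affine algebra, hence a genuine conductor square with $R/C$ and $\ol R/C$ zero-dimensional, and the absence of any lower bound on $r$) are correct glosses on the same argument.
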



\section{Applications}
Let $R$ be a ring of dimension $d$ and $Q$ a finitely generated $R$-module.
Let $\mu(Q)$ denote the  minimum number of generators of
$Q$. By Forster \cite{F67} and Swan \cite{Swan67},
$\mu(Q)\leq max\{\mu(Q_{\p})+\dim (R/\p) | \p \in
\Spec (R), Q_{\p}\neq 0\}$.  In particular, if $P$ is a projective
$R$-module of rank $r$, then $\mu(P)\leq r+d$. 

The following result is well known.

\begin{theorem}\label{01}
 Let $A$ be a ring such that {\it Serre dim} $A\leq d$.  Assume $A^m$
 is cancellative for $m\geq d+1$.  If $P$ is a projective $A$-module
 of rank $r\geq d+1$, then $\mu (P) \leq r+d$.
\end{theorem}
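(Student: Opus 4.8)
The plan is to bound the minimal number of generators by a descent argument: I show that a generating set of $P$ of size exceeding $r+d$ can never be minimal, because the two hypotheses together always let me shrink it by one. Write $n=\mu(P)$ and fix a surjection $A^{n}\twoheadrightarrow P$. Since $P$ is projective this splits, so $A^{n}\cong P\oplus Q$ for a projective $A$-module $Q$ of rank $n-r$. It therefore suffices to prove $n\le r+d$, and I argue by contradiction: suppose $n\ge r+d+1$. Note that the Forster--Swan bound recalled just before the statement is \emph{not} available here, since we are given only {\it Serre dim} $A\le d$ and no control on $\dim A$; this is exactly why the cancellation hypothesis must enter.

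Under the assumption $n\ge r+d+1$ the rank of $Q$ is $n-r\ge d+1$, so the hypothesis {\it Serre dim} $A\le d$ guarantees that $Q$ has a unimodular element, and hence splits as $Q\cong A\oplus Q'$ with $Q'$ projective of rank $n-r-1$. Substituting gives $A^{n}\cong P\oplus A\oplus Q'$. At this point I would like to cancel the single free summand $A$ to obtain $A^{n-1}\cong P\oplus Q'$, for then projecting onto the first factor would yield a surjection $A^{n-1}\twoheadrightarrow P$, forcing $\mu(P)\le n-1$ and contradicting minimality. This cancellation is the main obstacle: the hypothesis only supplies cancellation of $A^{m}$ for $m\ge d+1$, and in particular does not directly permit cancelling one copy of $A$.

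The device that overcomes this is to stabilize before cancelling, at exactly the level $d+1$ where the hypothesis bites. Adding $A^{d}$ to both sides of $A^{n}\cong P\oplus A\oplus Q'$ and using $A^{n}\oplus A^{d}=A^{n-1}\oplus A^{d+1}$ gives
\[
A^{n-1}\oplus A^{d+1}\;\cong\;(P\oplus Q')\oplus A^{d+1}.
\]
Now the free module being cancelled has rank $d+1$, so the assumption that $A^{d+1}$ is cancellative applies and yields $A^{n-1}\cong P\oplus Q'$, hence $\mu(P)\le n-1$, the desired contradiction. Therefore $\mu(P)\le r+d$. I would close by noting the trivial boundary case $\mu(P)=r$ (where the conclusion is immediate since $d\ge 0$), and by remarking that the whole argument invokes the cancellation hypothesis only for the single value $m=d+1$; this pins down why the statement is phrased with $m\ge d+1$, the rank condition on $Q$ needed to call on {\it Serre dim} $A\le d$ being precisely what the standing assumption $n\ge r+d+1$ delivers.
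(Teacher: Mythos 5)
Your skeleton is exactly the paper's: split $A^n\cong P\oplus Q$ off a minimal surjection $A^n\surj P$, use {\it Serre dim} $A\leq d$ to find a unimodular element in $Q$ (legitimate, since rank $Q=n-r\geq d+1$ under the contradiction hypothesis $n\geq r+d+1$), and then cancel a free rank-one summand to drop a generator. The paper phrases the last step by passing to $A^n/qA$ for $q\in \Um(Q)\subseteq \Um(A^n)$, which is the same manoeuvre as your splitting $Q\cong A\oplus Q'$. The one genuine flaw is in how you invoke the cancellation hypothesis. In this literature --- and in the paper's own proof, which reads ``since $n-1>d$, $A^{n-1}$ is cancellative, hence $A^{n-1}\cong A^n/qA$'' --- the statement ``$A^m$ is cancellative'' means that $A^m$ absorbs stabilization: $A^m\oplus A^t\cong X\oplus A^t$ forces $X\cong A^m$. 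It does \emph{not} mean that a free summand $A^m$ may be struck from both sides of an arbitrary isomorphism $X\oplus A^m\cong Y\oplus A^m$. Your step ``the free module being cancelled has rank $d+1$, so the assumption that $A^{d+1}$ is cancellative applies'' uses the second, strictly stronger reading, and that reading is false in precisely the situations the theorem targets: for $A=\BR[x,y,z]/(x^2+y^2+z^2-1)$ one has $d=2$, {\it Serre dim} $A\leq 2$ (Serre) and $A^m$ cancellative for $m\geq 3$ in the correct sense (Bass), yet the stably free rank-$2$ module $P$ with $P\oplus A\cong A^3$ satisfies $P\oplus A^3\cong A^2\oplus A^3$ with $P\not\cong A^2$, so rank-$(d+1)$ free summands cannot be removed from both sides in general. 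Likewise, in the paper's corollaries the cancellation hypothesis is verified through Dhorajia--Keshari and Schabh\"{u}ser, which deliver only the first reading.

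Fortunately your argument survives with a one-line repair, and your stabilization by $A^d$ then becomes superfluous: from $A^n\cong (P\oplus Q')\oplus A$ write $A^{n-1}\oplus A\cong (P\oplus Q')\oplus A$ and apply cancellativity (in the correct sense) of $A^{n-1}$, which the hypothesis grants because $n-1\geq r+d\geq d+1$; this gives $P\oplus Q'\cong A^{n-1}$, and projecting onto $P$ yields $\mu(P)\leq n-1$, the desired contradiction. This also corrects your closing remark: the hypothesis is not consumed only at $m=d+1$ but at $m=n-1$, whose value depends on $P$, and that is exactly why the theorem assumes cancellativity of $A^m$ for \emph{every} $m\geq d+1$.
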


\begin{proof}
Assume $\mu(P)=n>r+d$.  Consider a surjection $\phi: A^n\surj P$ with
$Q=ker(\phi)$.  Then $A^n\cong P\op Q$. Since $Q$ is a projective
$A$-module of rank $\geq d+1$, $Q$ has a unimodular element $q$.
Since $\phi(q)=0$, $\phi$ induces a surjection $\ol \phi:A^n/qA^n\surj
P$. Since $n-1>d$, $A^{n-1}$ is cancellative. Hence $A^{n-1}\cong
A^n/qA$ and $P$ is generated by $n-1$ elements, a contradiction.
$\hfill \gj$
\end{proof}

The following result is immediate from (\ref{01}, \ref{3t5}, \ref{3c11} and 
\cite{DK}).

\begin{corollary}
 Let $R$ be a ring of dimension $d$, $M$ a monoid and $P$ a 
 projective $R[M]$-module of rank $r>d$. Then:
 
 $(i)$ If $M\in \CC(\Phi)$, then $\mu(P)\leq r+d$.
 
 $(ii)$ If $M\subset \BZ^2_+$ is a normal monoid of rank $2$, then $\mu(P)\leq r+d$.
\end{corollary}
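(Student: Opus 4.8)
The plan is to deduce both statements directly from the general bound in Theorem~\ref{01}, whose two hypotheses are a Serre-dimension bound and a cancellation property for free modules. Since the assumption $r>d$ means $r\ge d+1$, the rank hypothesis of Theorem~\ref{01} is automatically satisfied in both parts, so the real work is to supply these two inputs for the ring $A=R[M]$. It is worth stressing at the outset where the content lies: the naive Forster--Swan estimate only gives $\mu(P)\le r+\dim R[M]=r+d+\mathrm{rank}(M)$, and the whole point is to replace $\dim R[M]$ by the Serre-dimension bound $\dim R=d$.

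For the Serre-dimension input I would argue by cases. In case $(i)$, since $M\in\CC(\Phi)$, Theorem~\ref{3t5} gives {\it Serre dim}\ $R[M]\le d$. In case $(ii)$, since $M\subset\BZ_+^2$ is normal of rank $2$, Corollary~\ref{3c11} gives {\it Serre dim}\ $R[M]\le d$. In either case every projective $R[M]$-module of rank $\ge d+1$ has a unimodular element, which is exactly the mechanism Theorem~\ref{01} uses to split off a free summand from the kernel of a minimal surjection.

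For the cancellation input I would invoke \cite{DK} to obtain that $R[M]^m$ is cancellative for $m\ge d+1$. Concretely, the cancellation used inside the proof of Theorem~\ref{01} is that a stably free $R[M]$-module of rank $m\ge d+1$ is free, and this follows from the transitivity of the elementary group $\E(R[M]\oplus Q)$ on $\Um(R[M]\oplus Q)$ established in \cite{DK} --- the same source (\cite{DK}, Theorem 4.5) whose transitivity statement was already used in proving Theorem~\ref{3t5}. With both inputs in hand, the conclusion is purely formal: given a surjection $A^n\surj P$ with $n=\mu(P)$, the kernel is projective of rank $n-r$; if $n>r+d$ then $n-r\ge d+1$, so the Serre-dimension bound yields a unimodular element in the kernel, and cancellation lets us drop a free factor, contradicting minimality of $n$. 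Hence $\mu(P)\le r+d$ in both cases.

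The main obstacle is the cancellation step: one must confirm that \cite{DK} genuinely delivers cancellation for $R[M]^m$ in the range $m\ge d+1$ for \emph{both} classes of monoids, $M\in\CC(\Phi)$ and normal rank-$2$ monoids in $\BZ_+^2$. Everything else reduces to citing Theorems~\ref{3t5} and \ref{01} and Corollary~\ref{3c11} and assembling them, so if the transitivity/cancellation results of \cite{DK} are available in the required generality the argument is immediate; verifying their applicability to these monoid algebras is the only nontrivial point.
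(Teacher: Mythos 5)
Your proposal is correct and follows essentially the same route as the paper, which states the corollary is immediate from Theorem~\ref{01}, Theorem~\ref{3t5}, Corollary~\ref{3c11} and the cancellation results of \cite{DK} --- precisely the assembly you describe, with \ref{3t5} supplying the Serre-dimension bound in case $(i)$, \ref{3c11} in case $(ii)$, and \cite{DK} supplying cancellativity of $R[M]^m$ for $m\geq d+1$ via elementary transitivity. Your closing caveat is the right one to check, and it does hold: the transitivity theorem of \cite{DK} applies to monoid algebras $R[M]$ for the monoids considered here, so the argument goes through exactly as you outline.
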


Let $M$ be a $c$-divisible monoid, $R$ a ring of dimension $d$ and
$n\geq max \{2,d+1\}$. Then Schaubh\"{u}ser \cite{Sch} proved that
$E_{n+1}(R[M])$ acts transitively on $\Um_{n+1}(R[M])$.  Using
Schaubh\"{u}ser's result and arguments of Dhorajia-Keshari (\cite{DK},
Theorem 4.4), we get that if $P$ is a projective $R[M]$-module of rank
$n$, then $E(R[M]\op P)$ acts transitively on $\Um(R[M]\op P)$.
Therefore the following result is immediate from (\ref{01} and
\ref{3t15}).

\begin{corollary}
 Let $R$ be a ring of dimension $1$, $M$ a $c$-divisible monoid and $P$ a 
 projective $R[M]$-module of rank $r\geq3$. Then $\mu(P)\leq r+1$.
\end{corollary}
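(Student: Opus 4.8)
The aim is to bound the number of generators by $r+d=r+1$, and the natural route is to feed Theorem \ref{01} with $A=R[M]$ and $d=1$. This demands two inputs: that $R[M]^m$ be cancellative for $m\geq 2$, and that the projective kernels arising in the descent of \ref{01} carry unimodular elements. The plan is to extract both from the two ingredients flagged just before the statement, namely Theorem \ref{3t15} and the transitivity of $E(R[M]\op P)$ on $\Um(R[M]\op P)$ for projective $P$ of rank $\geq 2$, coming from Schaubh\"{u}ser \cite{Sch} and Dhorajia--Keshari \cite{DK}.

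First I would record the cancellation property. Transitivity of the elementary action on $\Um(R[M]\op P)$ for every projective $P$ of rank $\geq 2$ is exactly the Bass-type hypothesis that lets one cancel a free rank-one summand whenever the complementary module has rank $\geq 2$; iterating, $R[M]^m$ is cancellative for $m\geq 2$, and in particular every stably free $R[M]$-module of rank $\geq 2$ is free. Next I would apply \ref{3t15} to $P$ itself, which is legitimate since $r\geq 3$, to write $P\cong L\op R[M]^{r-1}$ with $L=\wedge^r P$ an invertible module.

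The crux is the identity $L\op L^{-1}\cong R[M]^2$. Indeed $[L]+[L^{-1}]=[R[M]]+[L\ot L^{-1}]=2[R[M]]$ in $K_0(R[M])$, because $[\,\cdot\,]\mapsto[\,\cdot\,]-[R[M]]$ is a homomorphism $\mathrm{Pic}(R[M])\to \wt{K_0}(R[M])$ and $L\ot L^{-1}\cong R[M]$; hence $L\op L^{-1}$ is stably free of rank $2$, and therefore free by the previous step. Consequently $P\op L^{-1}\cong (L\op L^{-1})\op R[M]^{r-1}\cong R[M]^{r+1}$, and projecting onto the first factor yields a surjection $R[M]^{r+1}\surj P$, so $\mu(P)\leq r+1$. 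This is precisely the content of Theorem \ref{01} specialised to the present ranks, with \ref{3t15} supplying the splitting of $P$ and the cancellation above supplying its hypotheses.

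The main obstacle is the passage from the merely stable splitting to an honest one: over a general base $L\op L^{-1}\cong R[M]^2$ can fail, so the whole argument hinges on the rank-$2$ cancellation furnished by the Schaubh\"{u}ser--Dhorajia--Keshari transitivity result, which is the one genuinely hard input. Equivalently, if one prefers to run the induction of \ref{01} verbatim, the delicate point is the final reduction, where the kernel $Q$ has rank exactly $2$, below the range of \ref{3t15}; there one would observe that $L\op Q$ is stably free of rank $3$ and invoke the same cancellation to descend. The clean line-bundle computation above is what I would use to sidestep that borderline case entirely.
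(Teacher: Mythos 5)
Your overall architecture --- cancellation extracted from the Schaubh\"{u}ser--Dhorajia--Keshari transitivity result plus the splitting $P\cong L\op R[M]^{r-1}$ from (\ref{3t15}) --- is the route the paper intends when it calls the corollary immediate from (\ref{01}) and (\ref{3t15}), and your cancellation step is correct: transitivity of $\E(R[M]\op P)$ on $\Um(R[M]\op P)$ in ranks $\geq 2$ does imply that stably free $R[M]$-modules of rank $\geq 2$ are free. But the crux of your argument, namely that $L\op L^{-1}$ is stably free because $L\mapsto [L]-[R[M]]$ is a homomorphism $\mathrm{Pic}(R[M])\ra \wt{K_0}(R[M])$, is a genuine gap. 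Writing $A=R[M]$, in the ring $K_0(A)$ one has $[L]\cdot[L^{-1}]=[L\ot L^{-1}]=[A]$, whence
$$2[A]-[L]-[L^{-1}]=([L]-[A])\cdot([L^{-1}]-[A]),$$
so the additivity you assert is \emph{equivalent} to the stable freeness of $L\op L^{-1}$ that you are trying to establish --- as a justification it is circular. Worse, as a general principle it is false: the product $([L]-[A])([L^{-1}]-[A])$ is an obstruction class of the type $-c_1(L)^2$, and it is nonzero for suitable line bundles on smooth affine surfaces with nontrivial $CH^2$. Its vanishing does hold over rings of Krull dimension $\leq 1$ (by Serre's splitting theorem applied to the rank-$2$ module $L\op L^{-1}$), but $\dim R[M]=1+\mathrm{rank}\,M$ can be arbitrarily large, so no dimension argument is available, and nothing you have cited supplies the vanishing for $R[M]$.

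The repair lies inside your own toolkit and in fact shortens the whole proof: apply (\ref{3t15}) a second time, to the rank-$(r+1)$ module $P\op L^{-1}$, which is legitimate since $r+1\geq 4\geq 3$. As $\wedge^{r+1}(P\op L^{-1})\cong \wedge^rP\ot L^{-1}\cong L\ot L^{-1}\cong R[M]$, this gives $P\op L^{-1}\cong R[M]^{r+1}$, and the projection $R[M]^{r+1}\surj P$ yields $\mu(P)\leq r+1$ outright --- no cancellation and no $K_0$ computation needed. (Equivalently, applying (\ref{3t15}) to the rank-$3$ module $L\op L^{-1}\op R[M]$ shows it is free, so $L\op L^{-1}$ is stably free, after which your cancellation correctly gives $L\op L^{-1}\cong R[M]^2$.) Your closing worry about the rank-$2$ kernel in a verbatim run of (\ref{01}) is well founded: (\ref{3t15}) only bounds Serre dim $R[M]$ by $2$, so a literal application of (\ref{01}) gives only $\mu(P)\leq r+2$, and your sketched patch ($L\op Q$ stably free of rank $3$, hence free) does not by itself put a unimodular element in $Q$; it is precisely the determinant splitting used as above that recovers $r+1$, and the paper's one-line derivation should be read this way.
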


\medspace
\medskip
\noindent{\bf Acknowledgement.} We would like to thank the referee for
his/her critical remark. The second author would like to thank 
C.S.I.R., India for their fellowship.

{\small
{}

}

\end{document}